\documentclass[11pt]{amsart}
\usepackage{amsmath,amssymb,amsthm,mathtools}
\usepackage[margin=1.15in]{geometry}
\usepackage{booktabs}
\usepackage{listings}
\newcommand{\leanToolchain}{v4.33.0-rc2}
\newcommand{\mathlibRev}{51e6992efd06}
\newcommand{\repoTag}{v1.0}
\usepackage{hyperref}

\newtheorem{theorem}{Theorem}[section]
\newtheorem{maintheorem}{Theorem}

\newtheorem{lemma}[theorem]{Lemma}
\newtheorem{proposition}[theorem]{Proposition}
\newtheorem{corollary}[theorem]{Corollary}

\theoremstyle{definition}

\theoremstyle{remark}
\newtheorem{remark}[theorem]{Remark}

\numberwithin{equation}{section}
\allowdisplaybreaks
\DeclareMathOperator{\tr}{tr}
\DeclareMathOperator{\rank}{rank}
\DeclareMathOperator{\supp}{supp}
\DeclareMathOperator{\dist}{dist}
\DeclareMathOperator{\re}{Re}
\DeclareMathOperator{\im}{Im}
\newcommand{\RR}{\mathbb{R}}
\newcommand{\CC}{\mathbb{C}}
\newcommand{\ZZ}{\mathbb{Z}}
\newcommand{\eps}{\varepsilon}
\newcommand{\wh}{\widehat}
\newcommand{\ol}{\overline}
\newcommand{\Gt}{\widetilde{G}}

\newcommand{\Et}{\widetilde{E}}
\newcommand{\npl}{n_{+}}
\newcommand{\one}{\mathbf 1}
\newcommand{\dd}{\,d}
\newcommand{\HS}{\mathrm{HS}}

\title[More than two thirds of the zeta zeros are simple and on the critical line]%
{More than two thirds of the zeros of the Riemann zeta function are simple and on the critical line}

\author{Levent Alp\"oge}
\author{Ralph Furman}
\thanks{The mathematical argument in this paper was discovered and written by Claude, an AI developed by Anthropic. The listed authors verified the proof and take responsibility for its content. Jarred Sumner posed the problem and guided the investigation; the accompanying Lean~4 formalisation was orchestrated by Eric Easley. See the Acknowledgments, and Appendix~\ref{app:discovery} for an account of how the proof was found.}
\dedicatory{The mathematical argument in this paper was discovered and written by Claude,\\ an AI developed by Anthropic.\\ The listed authors verified the proof and take responsibility for its content.}
\date{\today}
\subjclass[2020]{11M06, 11M26, 15A42}
\keywords{Riemann zeta function, critical line, explicit formula, pair correlation, Sylvester's law of inertia}

\begin{document}

\begin{abstract}
We prove unconditionally that at least two thirds of the nontrivial zeros of the Riemann zeta function, counted with multiplicity, are simple and lie on the critical line, and that at least five sixths are distinct; the previous unconditional records are $\tfrac5{12}$ and $0.6603$. With the Montgomery--Taylor window the constants become $0.6725$ and $0.8362$. The argument makes Montgomery's 1973 deduction unconditional: the Riemann hypothesis, classically needed to read the zero side as a positive sum over real ordinates, is replaced by a rank--trace inequality applied to a finite compression of Weil's Hermitian form, with Sylvester's law of inertia handling off-line pairs. The analytic inputs are those of Aryan and of Baluyot, Goldston, Suriajaya and Turnage-Butterbaugh. The results extend to primitive Dirichlet $L$-functions and are formally verified in Lean~4.
\end{abstract}

\maketitle


\section{Introduction}\label{sec:intro}

\subsection{Results}\label{subsec:results}

Write $\rho=\beta+i\gamma$ for a nontrivial zero of $\zeta(s)$ and $m_\rho\ge1$ for its multiplicity. For $0\le T_1<T_2$ let
\begin{align*}
N(T_1,T_2)&:=\textstyle\sum_{T_1<\gamma\le T_2} m_\rho
  &&\text{(zeros, counted with multiplicity)},\\
N_d(T_1,T_2)&:=\#\{\rho:T_1<\gamma\le T_2\}
  &&\text{(distinct zeros)},\\
N_0^{*}(T_1,T_2)&:=\#\{\rho:T_1<\gamma\le T_2,\ \beta=\tfrac12\}
  &&\text{(distinct zeros on the critical line)},\\
N_0^{s}(T_1,T_2)&:=\#\{\rho:T_1<\gamma\le T_2,\ \beta=\tfrac12,\ m_\rho=1\}
  &&\text{(simple zeros on the critical line)};
\end{align*}
write also $N_0$ for the zeros on the line counted with multiplicity and $N^{s}$ for the simple zeros; thus $N_0^{s}\le N_0^{*}\le N_0\le N$ and $N_0^{s}\le N^{s}\le N_d\le N$. Write $N(T):=N(0,T)$, and recall $N(T,2T)=\frac T{2\pi}(\log\frac T{2\pi}+2\log2)-\frac T{2\pi}+O(\log T)$.

\begin{maintheorem}\label{thm:main}
As $T\to\infty$,
\[
\text{\rm(i)}\quad N_0^{s}(T,2T)\ge\bigl(\tfrac23-o(1)\bigr)N(T,2T),
\qquad
\text{\rm(ii)}\quad N_d(T,2T)\ge\bigl(\tfrac56-o(1)\bigr)N(T,2T).
\]
With the Montgomery--Taylor window $\psi_{\mathrm{MT}}$ of~\eqref{eq:psi} in place of the indicator window $\psi_0$, the constants improve to $2-c_{\mathrm{MT}}^{-1}=0.67250\ldots$ and $\tfrac12(3-c_{\mathrm{MT}}^{-1})=0.83625\ldots$ respectively, where $c_{\mathrm{MT}}^{-1}:=\tfrac12+\tfrac1{\sqrt2}\cot\tfrac1{\sqrt2}$; among windows~$\psi$ (equivalently, certificates of the form $2-R(\psi)$), this is optimal~\cite[Cor.~14]{CCLM17}. The ceiling over the broader class of all bandwidth-one certificates is approximately $0.682$; see \S\ref{subsec:limits}. A fortiori $N_0^{*}(T,2T),\ N_0(T,2T),\ N^{s}(T,2T)\ge(\tfrac23-o(1))N(T,2T)$.
\end{maintheorem}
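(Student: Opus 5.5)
We will make Montgomery's pair-correlation deduction unconditional. Montgomery's argument under RH runs as follows. For a nonnegative weight $w$ with $\hat w$ supported in $[-1,1]$ (bandwidth one), one evaluates the pair-correlation sum
\[
S(w):=\sum_{T<\gamma,\gamma'\le 2T} w\bigl((\gamma-\gamma')\tfrac{\log T}{2\pi}\bigr)
\]
via the explicit formula, namely Montgomery's theorem for $F(\alpha)$ with $|\alpha|\le1$. Multiplicities enter through the diagonal: $\sum_\rho m_\rho^2\le S(w)/w(0)$ up to off-diagonal positivity. The standard inequalities then give the counts. Cauchy--Schwarz gives $N_d\ge N^2/\sum m_\rho^2$. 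For simple zeros, $\sum(2-m_\rho)m_\rho\le N^s$, so $N^s\ge 2N-\sum m_\rho^2$. Taking $\sum m_\rho^2\le(\tfrac43+o(1))N$ with the indicator window gives $\tfrac23$ and $\tfrac56$ ($\tfrac56$ from $\sum m_\rho\one_{m_\rho=1}+\dots$; equivalently $N_d\ge\tfrac12(3N-\sum m^2)$). The Montgomery--Taylor window gives $c_{\mathrm{MT}}^{-1}$ in place of $\tfrac43$. The bound $\sum m^2\le R(\psi)N$ is exactly the certificate $2-R(\psi)$.

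The plan has three steps. \textbf{Step 1} (analytic input, taken from Aryan and Baluyot--Goldston--Suriajaya--Turnage-Butterbaugh): we compute unconditionally the form $Q(\rho,\rho')=\hat\psi$-kernel evaluated at $\rho-\bar\rho'$ summed over all zeros, i.e.\ Weil's Hermitian form $\sum_{\rho,\rho'}K(\rho,\ol{\rho'})$ with $K$ the Fourier transform of a window. Its value is $(R(\psi)+o(1))N(T,2T)$, where the off-line zeros are treated by analytic continuation of $K$ rather than by RH. \textbf{Step 2}: we restrict to the finite matrix $M=(K(\rho_j,\ol{\rho_k}))$ indexed by distinct zeros with multiplicity weights. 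Using the distinct zero set of size $N_d$ and the multiplicity vector $m$, we note that $m^{*}Mm$ equals the trace-side quantity. The matrix is positive semidefinite on the span of on-line zeros. An off-line quadruple $\rho,1-\bar\rho$ (with $\bar\rho$) contributes a $2\times2$ block which, by Sylvester's law of inertia, has at most one negative eigenvalue, and whose contribution we bound by its partner's positive part. \textbf{Step 3}: we apply a rank--trace inequality $(\tr M)^2\le\rank(M)\,\tr(M^2)$, or more directly $m^{*}Mm\ge\|m\|^2\cdot(\text{diag})-\text{error}$, to deduce $\sum m_\rho^2\le(R(\psi)+o(1))N$ counting off-line zeros as non-simple or non-line zeros. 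Hence $N_0^s\ge 2N-\sum m^2$ holds after counting off-line zeros with a penalty of at least $m_\rho$ each; the same linear-programming inequality $m(2-m)\le\one_{m=1,\beta=1/2}$ suffices if off-line zeros have diagonal weight $\ge 2m_\rho^2$ from the pairing.

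The main obstacle is Step 2: showing that off-line zeros do not spoil positivity, i.e.\ that pairs $\rho,1-\bar\rho$ contribute to the diagonal at least as much as an on-line zero of doubled multiplicity. First we would try to show that the compressed form on each pair's span has signature $(1,1)$ with a positive eigenvalue $\ge 2\,\hat\psi$-mass, and then apply inertia to the whole compression so that the negative index is at most the number of off-line pairs. The rank--trace inequality then absorbs these into the $2N-\sum m^2$ deficit.
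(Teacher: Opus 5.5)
There is a genuine gap in Steps 2--3. The bound you aim for in Step 3, $\sum_\rho m_\rho^2\le(R(\psi)+o(1))N$, is exactly the unconditional diagonal bound that Goldston and Suriajaya identified as the open obstacle. Off the line, the off-diagonal terms of the pair-correlation sum are not termwise nonnegative, so the full sum does not dominate its diagonal, and nothing in your plan repairs this.

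The inequality you invoke, $(\tr M)^2\le\rank M\cdot\tr M^2$, bounds a rank, not $\sum m_\rho^2$. Even on the right matrix it gives only $\npl(\Gt)\ge(\tr\Gt)^2/\|\Gt\|_{\HS}^2\to\frac34N$. That yields $\frac12$, not $\frac23$, because a double zero or an off-line pair spends two zeros on one positive direction. The paper never bounds $\sum m_\rho^2$.

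The missing idea is the \emph{linear} rank--trace inequality of Lemma~\ref{lem:ranktrace}: $\rank P_1\ge2\tr P_1+4\tr Q'-4b-\|P_1+Q'\|_{\HS}^2$ for $P_1\succeq0$ and $\npl(Q')\le b$. It is the matrix form of $x^2\ge2x-1$ on the rank side and $(x-2)^2\ge0$ on the positive-index side, with von Neumann's trace inequality absorbing the negative part of $Q'$ without any count. The paper applies it with the simple on-line zeros in $P_1$ and \emph{all} multiple on-line points and off-line pairs in $Q'$. The only inputs are $\rank P_1,\tr P_1\le s_1$, $\npl(Q')\le s_2+p$, $\tr\Gt=N+o(N)$ and $\|\Gt\|_{\HS}^2=(R(\psi)+o(1))N$. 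These give $3s_1+4(s_2+p)\ge(4-R(\psi)-o(1))N$, and (i) and (ii) follow using $s_1+2s_2+2p\le N$. Your instinct that an off-line pair should be charged like a double zero is right. But that ``flat charge $4$'' comes from this inequality, not from a diagonal weight, which does not exist for off-line zeros.

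Step 2 also puts the inertia on the wrong object. The explicit formula evaluates sums that are holomorphic in each ordinate $\gamma_\rho$; the kernel is a function of $\gamma_\rho-\gamma_{\rho'}$, not of $\rho-\bar\rho'$. A zero-indexed matrix built from it is therefore complex symmetric but not Hermitian once off-line zeros exist, so Sylvester's law does not apply. Moreover, in such a Gram-type matrix a pair $\{\rho,1-\bar\rho\}$ couples to every other zero rather than forming a $2\times2$ summand.

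In the paper the Hermitian object is the $d\times d$ compression $\Gt=(aL^2)^{-1}\sum_\rho m_\rho v_\rho v_\rho^{\mathsf T}$, indexed by the $d\sim N$ modulated test functions. It is real symmetric because $v_{1-\bar\rho}=\ol{v_\rho}$. The signature-$(1,1)$ blocks live in the zero-coefficient space, where the form really is block diagonal and of which $\Gt$ is a pull-back; Lemma~\ref{lem:inertia} then bounds $\npl$. The count that matters is the \emph{positive} index of the non-simple part; negative eigenvalues need no count.

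A per-block lower bound on the positive eigenvalue is beside the point, since the blocks interact. What is needed is that every zero, on or off the line, contributes its multiplicity to $\tr\Gt$ up to truncation. This holds because the Poisson--Gabor identity $\sum_k\wh\phi(z-\alpha_k)^2=aL^2$ of Lemma~\ref{lem:poisson} is valid for complex $z$.
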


\noindent The same holds for $(0,T)$ in place of $(T,2T)$, with rate $O(\log\log T/\log T)$ (Remark~\ref{rem:rate}). The previous unconditional records are $\tfrac5{12}$ for $N_0^{s}/N$~\cite{PRZZ20} and $0.6603$ for $N_d/N$~\cite{Wu15}. The constants $\tfrac23$, $\tfrac56$, $0.6725$ are those of Montgomery~\cite{Mon73}, Conrey--Ghosh--Gonek~\cite[(1.2)]{CGG98}, and Montgomery--Taylor~\cite{Mon75} under the Riemann hypothesis. Under RH, $0.6792$ for $N^{s}/N$ is known via semidefinite programming using the positivity of the form factor outside $[-1,1]$~\cite{CGdL20}, a regime the present method does not enter.

\begin{maintheorem}\label{thm:L}
Theorem~\ref{thm:main} holds verbatim for $L(s,\chi)$ in place of $\zeta(s)$, for any fixed primitive Dirichlet character $\chi$.
\end{maintheorem}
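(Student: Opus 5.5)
We carry the proof of Theorem~\ref{thm:main} over step by step, checking that each analytic input has a Dirichlet $L$-function counterpart. The proof of Theorem~\ref{thm:main} has two layers. The first is an \emph{algebraic} layer: a finite compression of Weil's Hermitian form, a rank--trace inequality, and Sylvester's law of inertia for the off-line pairs $\rho,1-\bar\rho$. The second is an \emph{analytic} layer: an explicit formula evaluating that form against a bandwidth-one test function, with Montgomery-type pair-correlation asymptotics supplied by Aryan and by Baluyot--Goldston--Suriajaya--Turnage-Butterbaugh. The algebraic layer uses only three facts: the zero set is symmetric under $\rho\mapsto 1-\bar\rho$, the zeros have multiplicities, and the Hermitian form is built from the zero multiset.

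For primitive $\chi$ the functional equation pairs $\rho$ with $1-\bar\rho$ as a zero of $L(s,\bar\chi)$. For a nonreal character this is not the same function, so the symmetry must be read as follows. The zeros of $L(s,\chi)$ are $\rho$; reflecting gives $1-\rho$ as a zero of $L(s,\bar\chi)$, and conjugating gives $1-\bar\rho$ as a zero of $L(s,\chi)$. So the reflection $\rho\mapsto1-\bar\rho$ does preserve the zero set of $L(s,\chi)$ itself, and the off-line pairs still come in the form Sylvester's argument needs. This lets us reuse the compression and inertia steps verbatim. The one thing we lose is symmetry of ordinates about $0$, so we treat only $\gamma\in(T,2T]$ with $T>0$, exactly as in the statement.

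For the analytic layer we use the explicit formula for $L(s,\chi)$. It has the Gamma factor $\Gamma((s+\mathfrak a)/2)$, conductor $q$, and prime sum $\sum\Lambda(n)\chi(n)n^{-s}$. The diagonal term that produces the pair-correlation main term comes from $|\chi(n)|^2=\one_{(n,q)=1}$, which alters only $O_q(1)$ prime powers. Hence the form-factor asymptotic $F(\alpha)\sim|\alpha|+T^{-2|\alpha|}\log T(1+o(1))$ for $|\alpha|\le1$ holds uniformly for fixed $q$; Aryan and BGST state their results for this setting, or adapt directly. The counting function becomes
\[
N(T,2T;\chi)=\tfrac{T}{2\pi}\log\tfrac{qT}{2\pi e}+O(\log qT),
\]
so $\log T$ is replaced by $\log(qT)\sim\log T$, and the same window optimisation yields the same constants $\tfrac23$, $\tfrac56$, $0.6725$, $0.83625$.

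The main obstacle is the unconditional evaluation of the off-diagonal and boundary terms uniformly in the needed range for nonreal $\chi$. There the zero side of the explicit formula is a sum over zeros of $L(s,\chi)$ paired with zeros of $L(s,\bar\chi)$. We must check that the Hermitian form is still Hermitian, meaning its kernel $w(\gamma-\gamma')$ is evaluated on the zeros of a single function, so that the rank--trace inequality applies. To check this, we would first write the compressed form $\sum_{\rho,\rho'}\hat\psi\bigl((\rho-\bar\rho')/i\bigr)$ over the zeros of $L(s,\chi)$ alone and verify that its evaluation by the explicit formula involves only $\chi(n)\bar\chi(m)$ with $n=m$. This holds because the twisted sum $\sum_\rho x^{\rho}$ for $L(s,\chi)$ is conjugated against the same sum. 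After that, the error terms are $O_q(T\log T/\log\log T)$ as before, which gives the theorem.
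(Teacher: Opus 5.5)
This is the paper's proof. You rerun \S\S\ref{sec:zero}--\ref{sec:prime} with the explicit formula for $L(s,\chi)$ (conductor term in $\mu$, shifted $\Gamma$-factor, $\Lambda(n)\chi(n)$ on the prime side, no pole terms), and you use that $\rho\mapsto1-\bar\rho$ preserves the zero multiset of $L(s,\chi)$, so $v_{1-\bar\rho}=\ol{v_\rho}$ and Proposition~\ref{prop:block} carries over. The diagonal prime terms carry $|\chi(n)|^2=\one_{(n,q)=1}$, so every change is an $O_q$ lower-order term and the constants are unchanged.

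The \emph{main obstacle} in your last paragraph is illusory, and the check you propose there is mis-stated. No zeros of $L(s,\bar\chi)$ enter the zero side, and $\Gt$ is real symmetric by the symmetry you already proved. The terms $\chi(n)\ol{\chi(m)}$ with $n\ne m$ and $\chi(n)\chi(m)$ are not absent; they are bounded exactly as in Proposition~\ref{prop:PP}, by Lemma~\ref{lem:MV} and the $1/\log(nm)$ bound, because $|\chi(n)|\le1$.
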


\noindent The arithmetic inputs are Weil's explicit formula, the Riemann--von Mangoldt formula and the bound $N(t,t+1)\ll\log t$, Stirling's estimate for $\Gamma'/\Gamma$, Chebyshev--Mertens estimates for $\sum_{n\le X}\Lambda(n)^2$ and $\sum_{n\le X}\Lambda(n)^2/n$, and the Montgomery--Vaughan inequality for the frequencies $\{\log n:n\le X\}$, $X\le T$. No mollifier, zero-density estimate, or zero-free region is used.

\subsection{The proof}\label{subsec:idea}

Weil's explicit formula defines a Hermitian form $W(f,g)=\sum_\rho m_\rho\,\wh f(\gamma_\rho)\,\ol{\wh g(\ol{\gamma_\rho})}$ on compactly supported test functions, $\gamma_\rho=(\rho-\tfrac12)/i$; its positivity on all of $C_c^2(\RR)$ is equivalent to the Riemann hypothesis~\cite{Wei52,Bom00}. We restrict $W$ to a family of $d\sim N(T,2T)$ modulated copies of a fixed window $\psi$, equispaced through $[T,2T]$, and let $\Gt$ be the resulting $d\times d$ real symmetric matrix, normalised so that an isolated simple on-line zero contributes $1$ to $\tr\Gt$ (\S\ref{sec:setup}).
\begin{itemize}
\item[(Z)] Up to a tail of trace norm $o(1)$, $\Gt=P+Q$: each distinct on-line zero contributes a rank-one positive form to $P$, each off-line pair $\{\rho,1-\bar\rho\}$ a block of signature $(1,1)$ to $Q$. Writing $s_1,s_2,p$ for the numbers of simple on-line zeros, multiple on-line points, and off-line pairs in the window, one has $\tr P\le N_0$, $\npl(Q)\le p$, $N\ge s_1+2s_2+2p$, and $\tr\Gt=(1+o(1))N$ (Propositions~\ref{prop:block}--\ref{prop:tail}).
\item[(P)] $\|\Gt\|_{\HS}^2=(R(\psi)+o(1))N$, where $R(\psi)$ depends only on the window (Lemma~\ref{lem:Rpsi}): $R(\psi_0)=\tfrac43$ for the indicator, $R(\psi_{\mathrm{MT}})=c_{\mathrm{MT}}^{-1}$ for Montgomery--Taylor. This is Montgomery's unconditional prime-side second moment~\cite{Mon73,Ary22,BGSTB24} (Theorem~\ref{thm:HS}).
\item[(L)] For Hermitian $P_1\succeq0$ and $Q'$ with $\npl(Q')\le b$ (Lemma~\ref{lem:ranktrace}),
\begin{equation}\label{eq:L}
\rank P_1\ \ge\ 2\tr P_1+4\tr Q'-4b-\|P_1+Q'\|_{\HS}^2.
\end{equation}
\end{itemize}
Taking $P_1$ to be the simple-on-line part of $P$ and $Q':=\Gt-P_1$, Theorem~\ref{thm:main} is then the single chain
\begin{equation}\label{eq:chain}
N_0^{s}+o(N)\ \ge\ \rank P_1\ \ge\ 4\tr\Gt-2N-\|\Gt\|_{\HS}^2\ =\ \bigl(2-R(\psi)-o(1)\bigr)N:
\end{equation}
the first inequality is Proposition~\ref{prop:block}; the second combines~(L) with $\tr P_1+2\npl(Q')\le N$ from~(Z); the equality is (Z) and~(P). At $\psi=\psi_0$ this is Theorem~\ref{thm:main}(i), and at $\psi=\psi_{\mathrm{MT}}$ the first constant of the second sentence. For~(ii), rearranging the second step gives $3s_1+4(s_2+p)\ge(4-R(\psi))N$; subtracting $s_1+2s_2+2p\le N$ gives $2(s_1+s_2+p)\ge(3-R(\psi))N$, whence $N_d\ge s_1+s_2+p\ge\tfrac12(3-R(\psi)-o(1))N$. The inequality~\eqref{eq:L} is the matrix form of $m^2\ge2m-1$; with the simple zeros on the rank side and the multiple ones at the flat charge~$4$, it recovers $m^2\ge3m-2$.

\subsection{Context}\label{subsec:history}

That a positive proportion of the zeros lie on the critical line is due to Selberg~\cite{Sel42}; Levinson's mollifier method~\cite{Lev74} gave $\tfrac13$ (simple, by~\cite{HB79}), Conrey~\cite{Con89} $>\!\tfrac25$, and~\cite{BCY11,Fen12,PRZZ20} the present record $\tfrac5{12}$. Under RH, Montgomery~\cite{Mon73} deduced $\tfrac23$ simple from the pair-correlation second moment;~\cite{Mon75,CG93,BHB13,CGdL20} improved further.

Montgomery's prime-side evaluation is a mean value of a Dirichlet polynomial of length $T$ and is unconditional; RH entered only to read the zero side termwise as a positive sum over real ordinates. Aryan~\cite{Ary22} made this explicit for the Fej\'er-kernel second moment, and Baluyot, Goldston, Suriajaya and Turnage-Butterbaugh~\cite{BGSTB24} then showed that Montgomery's form factor itself holds for the sum over all complex zeros. Goldston and Suriajaya~\cite{GS25,GS26} (also~\cite{BGSTB25,GLSS25}) subsequently showed that $\tfrac23$, $\tfrac56$ follow under the hypothesis that all zeros lie within $o(1/\log T)$ of the line, isolated the remaining obstacle as the termwise positivity that fails off the line, and asked what would follow if it could be removed. Theorem~\ref{thm:main} removes it: the inertia bound (Z)+(L) replaces the positivity. Section~\ref{subsec:GS} gives the comparison in detail.
The same argument applied to $\xi'$ shows unconditionally that at least $85.8\%$ of the zeros of $\xi'$ are simple and on the critical line (Remark~\ref{rem:xiprime}), the proportion Farmer, Gonek and Lee~\cite{FGL14} obtain on the Riemann hypothesis; the previous unconditional value was $79.874\%$~\cite{Con89}.
Averaging over primitive Dirichlet characters with smooth modulus weights raises the proportion to $0.811$ simple and on the line and $0.905$ distinct (Remark~\ref{rem:Davg}).

The observation that the \emph{negative} index of truncations of $W$ counts off-line pairs is Bombieri's~\cite{Bom00}; we are not aware of a previous use of rank and positive index together with a second-moment evaluation.

\subsection{What the results are not}\label{subsec:arenot}

The theorems are lower bounds only: the remaining third of the zeros are not shown to be off the line, merely not reached by the certificate. The inputs are insensitive to $o(N)$ off-line zeros and hold for Davenport--Heilbronn and Epstein zeta functions, for which the analogue of RH is false. Given only $\tr\Gt$, $\|\Gt\|_{\HS}^2$ and the block structure, the inequality~\eqref{eq:L} is sharp (\S\ref{subsec:limits}); improving on $\tfrac23$ by this route would require pair-correlation information beyond Fourier support~$1$.

\subsection{Formal verification}\label{subsec:lean}

The author of this paper is a large language model developed by Anthropic; the argument was found over two interactive sessions and checked by repeated adversarial review by independent model instances (Appendix~\ref{app:discovery}). A Lean~4 formalisation of Theorems~\ref{thm:main} and~\ref{thm:L} accompanies the paper (Appendix~\ref{app:lean}).

\subsection{Plan}
Section~\ref{sec:setup} fixes the explicit formula, the test family, and $\Gt$. Section~\ref{sec:linalg} proves~\eqref{eq:L}. Section~\ref{sec:zero} carries out~(Z); Section~\ref{sec:prime} carries out~(P). Section~\ref{sec:proofs} carries out the chain~\eqref{eq:chain}. Section~\ref{sec:variants} records the relation to~\cite{BGSTB24,GS25,GS26}, the sharpness of the method, and extensions. Appendix~\ref{app:lean} records the formalisation; Appendix~\ref{app:discovery} describes how the argument was found.

\subsection{Notation}
$\wh f(\xi)=\int_\RR f(u)e^{-iu\xi}\dd u$. For a Hermitian matrix $R$, $\npl(R)$ is the number of strictly positive eigenvalues and $\|R\|_{\HS}^2=\tr R^2$. We write $l:=\log(T/2\pi)$ and $D_0:=T^{1/2}$.

\section{The explicit formula and the test family}\label{sec:setup}

\subsection{The explicit formula}\label{subsec:EF}

For $\tau\in\RR$ set
\[
\mu(\tau):=\frac1{2\pi}\re\frac{\Gamma'}{\Gamma}\Bigl(\frac14+\frac{i\tau}2\Bigr)-\frac{\log\pi}{2\pi},
\qquad
\Pi_X(\tau):=\frac1\pi\re\frac{X^{\frac12+i\tau}}{\tfrac12+i\tau},
\qquad
P_X(\tau):=-\frac1\pi\sum_{n\le X}\frac{\Lambda(n)}{\sqrt n}\cos(\tau\log n),
\]
and $\nu_X:=\mu+\Pi_X+P_X$. Weil's explicit formula (e.g.\ \cite[\S5.5]{IK04}; our normalisation agrees with~\cite{BGSTB24}) reads, for $F\in C_c^2(\RR)$ even,
\begin{equation}\label{eq:EF}
\sum_\rho m_\rho\,\wh F(\gamma_\rho)=\wh F(\tfrac i2)+\wh F(-\tfrac i2)+\int_\RR\wh F(\tau)\mu(\tau)\dd\tau-2\sum_{n\ge1}\frac{\Lambda(n)}{\sqrt n}\,F(\log n);
\end{equation}
if moreover $\supp F\subset[-\log X,\log X]$ then the pole terms $\wh F(\pm\tfrac i2)$ are absorbed into $\int\wh F\cdot\Pi_X$ and
\begin{equation}\label{eq:EFnu}
\sum_\rho m_\rho\,\wh F(\gamma_\rho)=\int_\RR\wh F(\tau)\,\nu_X(\tau)\dd\tau.
\end{equation}
With $\ell_1:=l+2\log2-1$, the components of $\nu_X$ in~\eqref{eq:EFnu} satisfy (see e.g.\ \cite[\S5.5]{IK04})
\begin{gather}
\mu \text{ is even, smooth, increasing in }|\tau|,\quad \mu\ge\mu(0)>-1,\notag\\
\mu(\tau)=\tfrac1{2\pi}\log\tfrac{|\tau|}{2\pi}+O(\tau^{-2}),\quad \mu'(\tau)\ll|\tau|^{-1}\quad (|\tau|\ge1);\label{eq:mufacts}\\
|\Pi_X(\tau)|\le \frac{3\sqrt X}{1+|\tau|};\qquad |P_X(\tau)|\le\frac1\pi\sum_{n\le X}\frac{\Lambda(n)}{\sqrt n}\ll\sqrt X ;\label{eq:PiPfacts}\\
\int_T^{2T}\mu(\tau)\dd\tau=\frac{T\ell_1}{2\pi}+O\Bigl(\frac1T\Bigr)=N(T,2T)+O(l),\qquad
\int_T^{2T}\mu(\tau)^2\dd\tau=\frac{T\ell_1^2}{4\pi^2}\Bigl(1+O\bigl(l^{-2}\bigr)\Bigr).\label{eq:muints}
\end{gather}

\subsection{The window and the test family}\label{subsec:family}

This subsection fixes the window $\psi$, the test function $\phi$, and the sample grid $\{\alpha_k\}$; the matrix $\Gt$ is defined in \S\ref{subsec:G}.

Fix an even window $\psi\in C^2([-\tfrac12,\tfrac12])$ with $\psi>0$ on $[-\tfrac12,\tfrac12]$. The two choices we use are
\begin{equation}\label{eq:psi}
\psi_0:=\one_{[-1/2,1/2]},\qquad
\psi_{\mathrm{MT}}(s):=\cos(\sqrt2\,s)\,\one_{[-1/2,1/2]}(s).
\end{equation}
Fix $\chi\in C^\infty(\RR)$ nondecreasing, $\chi|_{(-\infty,0]}=0$, $\chi|_{[1,\infty)}=1$. With $L:=l=\log(T/2\pi)$ and $X:=e^L=T/(2\pi)$, set
\begin{equation}\label{eq:phidef}
\phi(u):=\chi\bigl(\tfrac L2+u\bigr)\,\chi\bigl(\tfrac L2-u\bigr)\cdot\psi(u/L)^{1/2}.
\end{equation}
Then $\phi\in C_c^2(\RR)$ is even, $0\le\phi\le1$, $\supp\phi=[-\tfrac L2,\tfrac L2]$, and $\phi^2(u)=\psi(u/L)$ off two transition intervals of length $O_\chi(1)$; $\supp(\phi*\phi)\subset[-L,L]=[-\log X,\log X]$, so \eqref{eq:EFnu} applies to $F=$ any product of modulated copies of $\phi$. By Paley--Wiener and $j=0,1,2$ integrations by parts,
\begin{equation}\label{eq:PW}
|\wh\phi(z)|\ \ll_\chi\ e^{\frac L2|\im z|}\cdot\min\bigl(L,\ |z|^{-1},\ |z|^{-2}\bigr),\qquad z\in\CC,
\end{equation}
since $\|\phi\|_1\le L$, $\|\phi'\|_1\ll_\chi1$, $\|\phi''\|_1\ll_\chi1$ (the $(\sqrt\psi)^{(j)}(u/L)\cdot L^{-j}$ contribution being $\ll L^{1-j}$ on the bulk).

Here and below, $C_\chi:=\|(\phi^2)''\|_1\ll_\chi1$.

Place $\alpha_k:=T+2\pi k/L$ for $k\in\ZZ$ and $d:=\lfloor LT/(2\pi)\rfloor$, so $\alpha_0,\dots,\alpha_{d-1}\in[T,2T)$ and $d=N(T,2T)+O(L)$. For a zero $\rho$ set
\begin{equation}\label{eq:vrho}
v_\rho:=\bigl(\wh\phi(\gamma_\rho-\alpha_k)\bigr)_{0\le k<d}\in\CC^d.
\end{equation}

\subsection{The matrix $\Gt$}\label{subsec:G}

Let $I:=[T,2T)$, $I':=[T-\sqrt T,\,2T+\sqrt T)$, and partition the zeros with $\re\gamma_\rho\in I'$ into $\mathrm{on}:=\{\rho:\beta=\tfrac12\}$, $\mathrm{off}:=\{\rho:\beta\ne\tfrac12\}$ (sets). Put $a:=\|\phi\|_2^2/L=\int_{-1/2}^{1/2}\psi+O_\chi(L^{-1})$ and define the real symmetric $d\times d$ matrices
\begin{equation}\label{eq:Gdef}
\Gt:=\frac1{aL^2}\sum_{\re\gamma_\rho\in I'}m_\rho\,v_\rho v_\rho^{\mathsf T},
\qquad
P:=\frac1{aL^2}\sum_{\rho\in\mathrm{on}}m_\rho\,v_\rho v_\rho^{\mathsf T},
\qquad
Q:=\Gt-P,
\end{equation}
and $\Et:=(aL^2)^{-1}\sum_{\re\gamma_\rho\notin I'}m_\rho\,v_\rho v_\rho^{\mathsf T}$. Rank and inertia are basis-independent, so no orthonormalisation is needed. The functional equation pairs $\mathrm{off}$ as $\{\rho,1-\bar\rho\}$ with $v_{1-\bar\rho}=\ol{v_\rho}$, so $\Gt,\Et$ are real symmetric. By \eqref{eq:EFnu},
\begin{equation}\label{eq:Gprime}
(\Gt+\Et)_{kk'}=\frac1{aL^2}\int_\RR\wh\phi(\tau-\alpha_k)\,\wh\phi(\tau-\alpha_{k'})\,\nu_X(\tau)\dd\tau.
\end{equation}

\begin{lemma}[Poisson--Gabor identity]\label{lem:poisson}
For all $z,z'\in\CC$,
\[
\sum_{k\in\ZZ}\wh\phi(z-\alpha_k)\,\wh\phi(z'-\alpha_k)=L\,\wh{\phi^2}(z-z'),\qquad\text{in particular}\quad \sum_{k\in\ZZ}\wh\phi(z-\alpha_k)^2=L\|\phi\|_2^2=aL^2 .
\]
For $z=z'$ real, truncating to $0\le k<d$ gives $\|v_\rho\|_2^2\le aL^2$ for $\rho\in\mathrm{on}$.
\end{lemma}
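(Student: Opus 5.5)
Write $\wh\phi(z-\alpha_k)=\int\phi(u)e^{-iu(z-\alpha_k)}\dd u$ and sum over $k$, which amounts to Poisson summation in the modulation parameter. Since $\alpha_k=T+2\pi k/L$, the product $\wh\phi(z-\alpha_k)\wh\phi(z'-\alpha_k)$ equals
\[
\iint\phi(u)\phi(u')e^{-iuz-iu'z'}e^{i(u+u')(T+2\pi k/L)}\dd u\dd u'.
\]
Summing over $k$, the Poisson formula $\sum_k e^{2\pi i k w/L}=L\sum_m\delta(w-mL)$ is applied with $w=u+u'$. Because $\supp\phi\subset[-L/2,L/2]$, we have $u+u'\in[-L,L]$, so only $m=0$ and the boundary points $w=\pm L$ could contribute. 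At $w=\pm L$ both $u$ and $u'$ must sit at the endpoint $\pm L/2$, where $\phi$ vanishes, together with its first derivatives, thanks to the cutoff $\chi$. These boundary terms are therefore zero.

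The surviving term $m=0$ imposes $u'=-u$. It gives
\[
L\int\phi(u)\phi(-u)e^{-iu(z-z')}\dd u=L\,\wh{\phi^2}(z-z'),
\]
using that $\phi$ is even.

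To make the distributional step rigorous, I would avoid delta functions. For fixed $z,z'$, the function $g(\alpha):=\wh\phi(z-\alpha)\wh\phi(z'-\alpha)$ is the Fourier transform in $\alpha$ of a compactly supported $C^2$ function of $w=u+u'$, namely a convolution-type integral of $\phi\cdot\phi$ with exponential weights. Moreover, \eqref{eq:PW} gives $g(\alpha)\ll\min(L^2,|\alpha|^{-4})$ along the real line in $\alpha$. This decay makes the sampled sum absolutely convergent, so classical Poisson summation applies. The sum $\sum_k g(T+2\pi k/L)$ then equals $(L/2\pi)$ times the sum over the dual lattice $L\ZZ$ of the inverse transform. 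Only the frequency $0$ lies in the interior of the support, and the frequencies $\pm L$ are killed by the vanishing of the endpoint value.

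The main obstacle is exactly this endpoint bookkeeping: the support of $\phi*\phi$ is $[-L,L]$, which touches the dual lattice points $\pm L$. I would check that $\phi$ vanishes at $\pm L/2$, which holds since $\chi(0)=0$. Then $\phi*\phi$, which is the relevant density, vanishes at $\pm L$ as an integral over a degenerate set. With $\phi*\phi$ continuous, no point mass arises there.

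The specialization follows directly. Taking $z=z'$ gives $\wh{\phi^2}(0)=\|\phi\|_2^2$, hence the value $L\|\phi\|_2^2=aL^2$ by the definition of $a$. For $\rho\in\mathrm{on}$, $\gamma_\rho$ is real and $\phi$ is real and even, so $\wh\phi(\gamma_\rho-\alpha_k)$ is real. Hence $|v_\rho|_k^2=\wh\phi(\gamma_\rho-\alpha_k)^2\ge0$, and truncating the nonnegative full sum to $0\le k<d$ gives $\|v_\rho\|_2^2\le aL^2$.
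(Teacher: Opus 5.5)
Your proposal is correct and is essentially the paper's proof. Both apply Poisson summation at spacing $2\pi/L$ to $\alpha\mapsto\wh\phi(z-\alpha)\wh\phi(z'-\alpha)$, whose Fourier transform is (up to $2\pi$ and a reflection) the continuous convolution of two modulated copies of $\phi$ supported in $[-L,L]$, so only the zero frequency of the dual lattice $L\ZZ$ survives; the truncation then follows from nonnegativity of the real squares. The vanishing of $\phi$ at $\pm L/2$ is not needed at the endpoint frequencies $\pm L$, since a continuous function vanishing outside $[-L,L]$ already vanishes at $\pm L$, which is how the paper handles them.
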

\begin{proof}
Fix $z,z'$ and let $\Upsilon(s):=\wh\phi(z-s)\wh\phi(z'-s)$. With $\phi_z(u):=\phi(u)e^{izu}$ and $H:=\phi_z\ast\phi_{z'}\in C_c(\RR)$ one has $\Upsilon(s)=\wh H(-s)$; hence $\Upsilon$ is smooth with $\Upsilon(s)=O_\chi(|s|^{-4})$ by \eqref{eq:PW}, and by Fourier inversion $\wh \Upsilon(\xi)=2\pi H(\xi)$, which is continuous and vanishes for $|\xi|\ge L$. Poisson summation $\sum_{k\in\ZZ}\Upsilon(T+kh)=h^{-1}\sum_{m\in\ZZ}\wh \Upsilon(-2\pi m/h)e^{2\pi i mT/h}$ holds; since $h=2\pi/L$ the dual lattice is $L\ZZ$, so only $m=0$ contributes, giving $h^{-1}\wh \Upsilon(0)=\frac{L}{2\pi}\cdot2\pi\int\phi(u)^2e^{i(z-z')u}\dd u=L\,\wh{\phi^2}(z-z')$.
\end{proof}
Thus the Gabor system at the critical density $h=2\pi/L$ has a translation-invariant frame kernel with no aliasing error, for any window supported in an interval of length $L$.

\begin{lemma}[Montgomery--Vaughan, bilinear form]\label{lem:MV}
Let $\lambda_1,\dots,\lambda_R\in\RR$ be distinct, $\delta_r:=\min_{s\ne r}|\lambda_r-\lambda_s|$, and $x_r,z_r\in\CC$. Then
\[
\Bigl|\sum_{r\ne s}\frac{x_r\ol{z_s}}{\lambda_r-\lambda_s}\Bigr|\ \le\ \frac{3\pi}2\Bigl(\sum_r\frac{|x_r|^2}{\delta_r}\Bigr)^{1/2}\Bigl(\sum_r\frac{|z_r|^2}{\delta_r}\Bigr)^{1/2}.
\]
\end{lemma}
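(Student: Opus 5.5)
The plan is to treat this as the weighted Hilbert inequality of Montgomery and Vaughan, and to deduce the bilinear form from their quadratic-form statement through a Hermitian-operator norm argument. The quadratic form of that inequality is
\[
\Bigl|\sum_{r\ne s}\frac{x_r\ol{x_s}}{\lambda_r-\lambda_s}\Bigr|\le\frac{3\pi}2\sum_r\frac{|x_r|^2}{\delta_r},
\]
valid for distinct real $\lambda_r$ and complex $x_r$. I will take it from Montgomery--Vaughan, \emph{Hilbert's inequality}, J.\ London Math.\ Soc.\ 1974; their Theorem~1 gives exactly the constant $\tfrac{3\pi}2$. Reproving it from scratch is the only substantive difficulty, and I will not do so here.

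Set $K_{rs}:=(\lambda_r-\lambda_s)^{-1}$ for $r\ne s$ and $K_{rr}:=0$. Then $K$ is real antisymmetric, so $H:=iK$ is Hermitian. Let $\Delta:=\mathrm{diag}(\delta_r)$, which is positive definite because the $\lambda_r$ are distinct. Define the Hermitian matrix $M:=\Delta^{1/2}H\Delta^{1/2}$. For any $y\in\CC^R$, put $x:=\Delta^{1/2}y$; then
\[
y^*My=x^*Hx=i\sum_{r\ne s}\frac{\ol{x_r}x_s}{\lambda_r-\lambda_s}.
\]
Up to the factor $i$ and a relabelling $r\leftrightarrow s$, this is the quadratic form above. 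The cited inequality therefore gives $|y^*My|\le\tfrac{3\pi}2\sum_r|x_r|^2/\delta_r=\tfrac{3\pi}2\|y\|_2^2$.

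Since $M$ is Hermitian, its operator norm equals $\sup_{\|y\|=1}|y^*My|$, so $\|M\|_{\mathrm{op}}\le\tfrac{3\pi}2$. Now take arbitrary $x,z$ and write $x=\Delta^{1/2}y$ and $z=\Delta^{1/2}w$. The bilinear sum in the statement is, up to the unimodular factor $i$ and a transposition, equal to $w^*My$. Cauchy--Schwarz then gives
\[
|w^*My|\le\|M\|_{\mathrm{op}}\|y\|\|w\|\le\frac{3\pi}2\Bigl(\sum_r\frac{|x_r|^2}{\delta_r}\Bigr)^{1/2}\Bigl(\sum_r\frac{|z_r|^2}{\delta_r}\Bigr)^{1/2},
\]
which is the claimed bound. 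The one point needing care is the index bookkeeping. Swapping $r,s$ in $\sum x_r\ol{z_s}K_{rs}$ replaces $K$ by $K^{\mathsf T}=-K$, and the conjugation moves onto $x$ or $z$. Each of these operations changes the sum only by a unimodular factor, and the identification with $w^*My$ or its conjugate absorbs it, so the modulus bound is unaffected.
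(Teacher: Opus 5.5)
Your proposal is correct and is essentially the paper's proof. Both arguments take the $z=x$ case from Montgomery--Vaughan and read it as the operator-norm bound $\|\Delta^{1/2}H\Delta^{1/2}\|\le\tfrac{3\pi}2$ for the Hermitian matrix $H_{rs}=i/(\lambda_r-\lambda_s)$ (the paper writes $\Delta=\mathrm{diag}(\delta_r^{1/2})$ where you write $\Delta^{1/2}$), then pass to the bilinear form by Cauchy--Schwarz; note only that the paper cites the weighted $\tfrac{3\pi}2$ inequality as Theorem~2 of Montgomery--Vaughan, not Theorem~1.
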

\begin{proof}
For $z=x$ this is the weighted Hilbert inequality of~\cite[Theorem~2]{MV74}; see also~\cite[Ch.~7]{Mon94}. In general, with $H_{rs}:=i/(\lambda_r-\lambda_s)$ ($r\ne s$), $H_{rr}=0$, and $\Delta:=\mathrm{diag}(\delta_r^{1/2})$, the case $z=x$ says $\|\Delta H\Delta\|\le\tfrac{3\pi}2$ (operator norm, $\Delta H\Delta$ Hermitian); then $|x^*Hz|\le\tfrac{3\pi}2\|\Delta^{-1}x\|\,\|\Delta^{-1}z\|$.
\end{proof}

We apply Lemma~\ref{lem:MV} with $\{\lambda_r\}=\{\log n:n\le X\text{ prime power}\}$; consecutive prime powers satisfy $\log\tfrac{n'}n\ge\tfrac1{2n}$, so
\begin{equation}\label{eq:deltan}
\delta_n^{-1}\le2n,\qquad \sum_n\delta_n^{-1}|x_n|^2\le2\sum_n n|x_n|^2.
\end{equation}

\section{Linear algebra}\label{sec:linalg}

\begin{lemma}[Inertia under pull-back]\label{lem:inertia}
Let $Q_0$ be a Hermitian form on $\CC^m$ and $A\colon\CC^d\to\CC^m$ linear. Then $\npl(A^*Q_0A)\le\npl(Q_0)$.
\end{lemma}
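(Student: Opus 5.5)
We argue through the min--max (Courant--Fischer) characterisation of the positive index: for a Hermitian form $R$ on $\CC^d$, $\npl(R)$ equals the largest dimension of a subspace $V\subset\CC^d$ on which $R$ is positive definite, meaning $x^*Rx>0$ for all $0\ne x\in V$. One direction comes from taking $V$ to be the span of eigenvectors with positive eigenvalues. For the other, any $V$ of dimension exceeding $\npl(R)$ meets the span of the eigenvectors with eigenvalues $\le0$ nontrivially, because the dimensions add to more than $d$. At such a common vector the form is $\le0$.

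Now set $R:=A^*Q_0A$ and let $V\subset\CC^d$ be a subspace with $\dim V=\npl(R)$ on which $R$ is positive definite. For $0\ne x\in V$ we have $(Ax)^*Q_0(Ax)=x^*Rx>0$. In particular $Ax\ne0$, so $A|_V$ is injective and $AV\subset\CC^m$ has dimension $\dim V$. Every nonzero element of $AV$ has the form $Ax$ with $0\ne x\in V$, so $Q_0$ is positive definite on $AV$. Applying the characterisation to $Q_0$ then gives $\npl(R)=\dim AV\le\npl(Q_0)$.

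No step is a real obstacle. The only point needing care is injectivity of $A$ on $V$, and it follows from strict positivity; the lemma needs no injectivity of $A$ globally. We stay with the variational characterisation rather than Sylvester's law itself, since Sylvester's law applies only to invertible changes of basis and $A$ may be rectangular or degenerate. Over $\CC$ the form $x\mapsto x^*Rx$ is real-valued because $R$ is Hermitian, so all the positivity statements make sense.
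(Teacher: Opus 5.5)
Your proof is correct and is the paper's argument. You take a subspace on which $A^*Q_0A$ is positive definite, observe that $A$ is injective on it and that $Q_0$ is positive definite on its image, and compare dimensions; you additionally justify the variational characterisation of $\npl$, which the paper uses without comment.
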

\begin{proof}
If $Q_0\circ A$ is positive definite on a subspace $U\subset\CC^d$, then $A|_U$ is injective and $Q_0$ is positive definite on $A(U)$; hence $\dim U=\dim A(U)\le\npl(Q_0)$.
\end{proof}

\begin{lemma}[Rank--trace inequality]\label{lem:ranktrace}
Let $P,Q$ be Hermitian $d\times d$ matrices with $P\succeq0$, $\rank P\le r$, and $\npl(Q)\le b$. Then
\begin{equation}\label{eq:ranktrace}
r\ \ge\ 2\tr P+4\tr Q-4b-\|P+Q\|_{\HS}^2.
\end{equation}
Equality holds if $P=\Pi_1$, $Q=2\Pi_2$ for orthogonal projections $\Pi_1\perp\Pi_2$ of ranks $r,b$.
\end{lemma}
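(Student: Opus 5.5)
The plan is to pass to the spectrum of $R:=P+Q$ and compare its positive eigenvalues with those of $P$ and $Q$ via Weyl's inequalities. Write $\|P+Q\|_{\HS}^2=\sum_i\lambda_i(R)^2$ and $\tr P+\tr Q=\tr R=\sum_i\lambda_i(R)$. The right-hand side of \eqref{eq:ranktrace} is $4\tr R-2\tr P-\|R\|_{\HS}^2-4b$, so \eqref{eq:ranktrace} is equivalent to
\[
\sum_i f(\lambda_i(R))\ \ge\ -4b-r-2\tr P,\qquad f(x):=x^2-4x .
\]
A direct expansion of $\|P+Q\|_{\HS}^2$ into $P$- and $Q$-parts leaves the cross term $2\tr(PQ)$, which has no sign. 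Working with the eigenvalues of $R$ avoids this term entirely, and this is the key choice.

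Order the eigenvalues of $R$ as $\lambda_1\ge\lambda_2\ge\cdots$, and let $p_1\ge p_2\ge\cdots\ge0$ be those of $P$, with $p_j=0$ for $j>r$. Every eigenvalue with $\lambda_i\le0$ has $f(\lambda_i)\ge0$, so those terms can be discarded. For the top $b$ eigenvalues I use only $f\ge-4$, since $f(x)=(x-2)^2-4$. This contributes at least $-4b$.

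For the remaining indices $i=b+j$ with $j\ge1$, Weyl's inequality gives
\[
\lambda_{b+j}(R)\le\lambda_j(P)+\lambda_{b+1}(Q)\le p_j,
\]
because $\npl(Q)\le b$ forces $\lambda_{b+1}(Q)\le0$. So each such positive eigenvalue satisfies $0<\lambda\le p_j$, and in particular it vanishes unless $j\le r$. For these I claim $f(\lambda)\ge-1-2p_j$. Indeed,
\[
\lambda^2-4\lambda+1+2p_j\ \ge\ \lambda^2-2\lambda+1=(\lambda-1)^2\ \ge0,
\]
using $p_j\ge\lambda$. Summing over $j\le r$ gives at least $-r-2\tr P$, and adding the top-$b$ contribution yields the displayed inequality.

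For the equality case, take $P=\Pi_1$ and $Q=2\Pi_2$. Then $R$ has eigenvalue $1$ with multiplicity $r$, eigenvalue $2$ with multiplicity $b$, and $0$ otherwise. Substituting directly gives $2r+8b-4b-(r+4b)=r$, so equality holds.

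The main obstacle is the indefinite cross term $\tr(PQ)$. It is handled by the index shift in Weyl's inequality: all of $Q$'s positive part is absorbed into the top $b$ eigenvalues at flat cost $4$ each, and the remaining eigenvalues are then dominated termwise by those of $P$.
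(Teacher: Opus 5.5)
Your proof is correct, and it takes a different route from the paper's. The paper keeps the direct expansion you set aside. It writes $Q=Q_+-Q_-$, drops $2\tr(PQ_+)\ge0$ and $\tr(Q_+Q_-)=0$, and controls the one genuinely indefinite cross term $\tr(PQ_-)$ by von Neumann's trace inequality $\tr(PQ_-)\le\sum_ip_in_i$. It then finishes with $x^2\ge2x-1$ applied to $x=p_i-n_i$ ($i\le r$) and $q^2\ge4q-4$ applied to the at most $b$ positive eigenvalues of $Q_+$. You never split $Q$. The index-shifted Weyl inequality $\lambda_{b+j}(P+Q)\le\lambda_j(P)+\lambda_{b+1}(Q)\le p_j$ lets the positive index of $Q$ be paid for by the top $b$ eigenvalues of $R=P+Q$ at flat cost $4$ each, via $(\lambda-2)^2\ge0$. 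Every later positive eigenvalue is then dominated termwise by an eigenvalue of $P$, where $(\lambda-1)^2\ge0$ together with $\lambda\le p_j$ suffices. Both proofs close with the same scalar inequalities $(x-1)^2\ge0$ and $(x-2)^2\ge0$; these are also the cases of Remark~\ref{rem:ranktrace-alt}, which reaches them by a variational commutation argument. Your version needs only Courant--Fischer, the tool already behind Lemma~\ref{lem:weyl}, rather than von Neumann's inequality. It also exhibits the lemma as a statement about the spectrum of $P+Q$ alone. Since $P_1+Q'=\Gt$ in the application, the ``rank side at $m^2\ge2m-1$, index side at flat charge~$4$'' bookkeeping of \S\ref{subsec:limits}(c) becomes visible directly in the eigenvalues of $\Gt$. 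In exchange, the paper's expansion keeps each discarded nonnegative quantity explicit: the term $\tr(PQ_+)$, the von Neumann gap, and the slack in $-2n_i\ge-4n_i$. That makes it the more convenient form for analysing near-equality.
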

\begin{proof}
Write $Q=Q_+-Q_-$ with $Q_\pm\succeq0$, $Q_+Q_-=0$, $\rank Q_+\le b$. Then $\|P+Q\|_{\HS}^2=\|P\|_{\HS}^2+\|Q_+\|_{\HS}^2+\|Q_-\|_{\HS}^2+2\tr(PQ_+)-2\tr(PQ_-)-2\tr(Q_+Q_-)$ and $\tr(PQ_+)\ge0$, $\tr(Q_+Q_-)=0$. Let $p_1\ge\cdots\ge p_d\ge0$ and $n_1\ge\cdots\ge n_d\ge0$ be the eigenvalues of $P$ and~$Q_-$. By von Neumann's trace inequality $\tr(PQ_-)\le\sum_i p_in_i$, so
\[
\|P\|_{\HS}^2-2\tr(PQ_-)+\|Q_-\|_{\HS}^2\ \ge\ \sum_i(p_i-n_i)^2
\ \ge\ \sum_{i\le r}\bigl(2(p_i-n_i)-1\bigr)+\sum_{i>r}n_i^2
\ \ge\ 2\tr P-r-4\tr Q_-,
\]
using $x^2\ge2x-1$, then $-2n_i\ge-4n_i$ for $i\le r$ and $n_i^2\ge0\ge-4n_i$ for $i>r$. And $\|Q_+\|_{\HS}^2=\sum_j q_j^2\ge\sum_j(4q_j-4)\ge4\tr Q_+-4b$ over the $\le b$ positive eigenvalues. Adding, and $\tr Q=\tr Q_+-\tr Q_-$, gives~\eqref{eq:ranktrace}.
\end{proof}

\begin{remark}\label{rem:ranktrace-alt}
Alternatively: the map $U\mapsto\|P+UQU^*\|_{\HS}^2$ on the unitary group is continuous on a compact set, and at a minimum the first-order condition $\tr([P,UQU^*]H)=0$ for all Hermitian $H$ forces $[P,UQU^*]=0$; simultaneous diagonalisation then reduces \eqref{eq:ranktrace} to the scalar inequalities $(x-1)^2\ge0$, $x^2\ge0$, $(x-2)^2\ge0$ in the three sign cases of $(p_i,b_i)$. Setting $Q=0$ and optimising the coefficient~$2$ recovers $\rank P\ge(\tr P)^2/\|P\|_{\HS}^2$.
\end{remark}

\begin{lemma}[Weyl]\label{lem:weyl}
If $A,E$ are Hermitian with $\|E\|\le\theta$, then $\npl(A)\ge\#\{i:\lambda_i(A+E)>\theta\}$.
\end{lemma}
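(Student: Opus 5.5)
This is Weyl's perturbation inequality combined with Courant--Fischer; the plan is to produce a large subspace on which $A$ is positive definite. Order the eigenvalues of $A+E$ decreasingly as $\lambda_1\ge\cdots\ge\lambda_d$. Let $j:=\#\{i:\lambda_i(A+E)>\theta\}$, so that $\lambda_1(A+E),\dots,\lambda_j(A+E)>\theta$. Let $U$ be the span of the corresponding eigenvectors, so $\dim U=j$.

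For any unit vector $x\in U$, expand $x$ in the orthonormal eigenbasis of $U$. This gives $x^*(A+E)x\ge\lambda_j(A+E)>\theta$. Since $\|E\|\le\theta$ in operator norm, $|x^*Ex|\le\theta$, and therefore
\[
x^*Ax=x^*(A+E)x-x^*Ex>\theta-\theta=0 .
\]
Thus $A$ is positive definite on the $j$-dimensional subspace $U$.

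The final step is the standard inertia fact used in Lemma~\ref{lem:inertia}: $\npl(A)$ equals the maximal dimension of a subspace on which the form $x\mapsto x^*Ax$ is positive definite. To see this, suppose $\dim U>\npl(A)$. Then $U$ meets the span of the eigenvectors of $A$ with nonpositive eigenvalues nontrivially, since that span has dimension $d-\npl(A)$. Any nonzero vector $x$ in the intersection satisfies $x^*Ax\le0$, contradicting positivity on $U$. Hence $\npl(A)\ge j$, which is the claim.

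There is no real obstacle here. The only care needed is the strict inequality: eigenvalues exactly equal to $\theta$ are excluded by the strict ``$>$'' in the count, and this is precisely what makes $x^*Ax$ strictly positive.
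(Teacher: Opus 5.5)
Your proof is correct and is essentially the paper's argument: the paper quotes Weyl's inequality $\lambda_i(A+E)\le\lambda_i(A)+\|E\|$ (via Courant--Fischer) to get $\lambda_i(A)>0$ for each of the $j$ indices in question. You instead unfold the same min--max reasoning by hand, exhibiting the $j$-dimensional subspace on which $A$ is positive definite and then using the dimension count to conclude $\npl(A)\ge j$.
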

\begin{proof}
$\lambda_i(A+E)\le\lambda_i(A)+\|E\|$ (Courant--Fischer).
\end{proof}

\section{The zero side}\label{sec:zero}

Throughout \S\S\ref{sec:zero}--\ref{sec:proofs} the implied constants depend only on $\chi$ and $\psi$, and we take $T$ large.

\begin{proposition}[Block structure]\label{prop:block}
$P\succeq0$ with $\rank P\le N_0^{*}(I')$ and $\tr P\le N_0(I')$; and $\npl(Q)\le\tfrac12\#\mathrm{off}$.
\end{proposition}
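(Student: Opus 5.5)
The plan is to read both claims directly off the definition~\eqref{eq:Gdef}, using Lemma~\ref{lem:poisson} for the trace bound and Lemma~\ref{lem:inertia} for the positive index. The one preliminary fact is how $v_\rho$ behaves under complex conjugation. Since $\phi$ is real and even, $\wh\phi$ is real on $\RR$ and satisfies $\wh\phi(\ol z)=\ol{\wh\phi(z)}$ for all $z\in\CC$.

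\emph{On-line part.} For $\rho\in\mathrm{on}$ the ordinate $\gamma_\rho$ is real. Hence $v_\rho\in\RR^d$ and $v_\rho v_\rho^{\mathsf T}\succeq0$ has rank one. Since $\mathrm{on}$ is a set of distinct zeros with $\re\gamma_\rho\in I'$, the matrix $P$ is a nonnegative combination of at most $N_0^{*}(I')$ rank-one positive semidefinite matrices. This gives $P\succeq0$ and $\rank P\le N_0^{*}(I')$. For the trace,
\[
\tr P=\frac1{aL^2}\sum_{\rho\in\mathrm{on}}m_\rho\|v_\rho\|_2^2\le\sum_{\rho\in\mathrm{on}}m_\rho=N_0(I'),
\]
where the inequality is the truncated case of Lemma~\ref{lem:poisson}, namely $\|v_\rho\|_2^2\le aL^2$ for real $\gamma_\rho$.

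\emph{Off-line part.} Let $\rho'=1-\ol\rho$. A direct computation gives
\[
\gamma_{\rho'}=\frac{\tfrac12-\ol\rho}{i}=\ol{\Bigl(\frac{\rho-\frac12}{i}\Bigr)}=\ol{\gamma_\rho}.
\]
So $\re\gamma_{\rho'}=\re\gamma_\rho$, and the functional equation gives $m_{\rho'}=m_\rho$. Moreover $\rho'\ne\rho$, because $\rho=1-\ol\rho$ forces $\beta=\tfrac12$. Thus $\mathrm{off}$ splits into $p=\tfrac12\#\mathrm{off}$ disjoint pairs, each lying entirely inside $I'$. Since the $\alpha_k$ are real, $v_{\rho'}=\ol{v_\rho}$. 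Writing $v_\rho=x_\rho+iy_\rho$ with $x_\rho,y_\rho\in\RR^d$, one pair contributes
\[
m_\rho\bigl(v_\rho v_\rho^{\mathsf T}+\ol{v_\rho}\,\ol{v_\rho}^{\mathsf T}\bigr)=2m_\rho\bigl(x_\rho x_\rho^{\mathsf T}-y_\rho y_\rho^{\mathsf T}\bigr).
\]
Summing over pairs gives $Q=A^{\mathsf T}Q_0A$. Here $A\colon\RR^d\to\RR^{2p}$ has rows $x_\rho^{\mathsf T},y_\rho^{\mathsf T}$, one pair of rows per pair, and $Q_0$ is block diagonal with blocks $\tfrac{2m_\rho}{aL^2}\,\mathrm{diag}(1,-1)$. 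Then $\npl(Q_0)=p$, and Lemma~\ref{lem:inertia} (applied after complexifying, which does not change inertia) gives $\npl(Q)\le p=\tfrac12\#\mathrm{off}$.

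There is no real obstacle in this argument. The only points needing care are the conjugation identity $\gamma_{1-\ol\rho}=\ol{\gamma_\rho}$ together with $\wh\phi(\ol z)=\ol{\wh\phi(z)}$, and the observation that off-line pairs never straddle the boundary of $I'$. The trace bound is immediate from Lemma~\ref{lem:poisson}.
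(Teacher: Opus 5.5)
Your proposal is correct and follows the paper's proof essentially step for step. For the on-line part you use real rank-one summands together with the truncated case of Lemma~\ref{lem:poisson}; for the off-line part you use the conjugate pairing $v_{1-\ol\rho}=\ol{v_\rho}$, which makes each pair contribute $2m_\rho(x_\rho x_\rho^{\mathsf T}-y_\rho y_\rho^{\mathsf T})$, a pull-back of $\mathrm{diag}(1,-1)$, and then apply Lemma~\ref{lem:inertia}. You are in fact slightly more explicit than the paper about the normalising factor $2/(aL^2)$, the identity $\gamma_{1-\ol\rho}=\ol{\gamma_\rho}$, and the fact that off-line pairs never straddle the boundary of $I'$.
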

\begin{proof}
For $\rho\in\mathrm{on}$, $\gamma_\rho\in\RR$, so $v_\rho\in\RR^d$ and $m_\rho\,v_\rho v_\rho^{\mathsf T}\succeq0$ has rank $\le1$; summing, $P\succeq0$ with $\rank P\le\#\mathrm{on}=N_0^{*}(I')$. By Lemma~\ref{lem:poisson} at $z=z'=\gamma_\rho$ and nonnegativity of the summands, $\|v_\rho\|^2=\sum_{0\le k<d}\wh\phi(\gamma_\rho-\alpha_k)^2\le L\|\phi\|_2^2$; summing $m_\rho\|v_\rho\|^2$ and dividing by $L\|\phi\|_2^2$ gives $\tr P\le\sum_{\rho\in\mathrm{on}}m_\rho=N_0(I')$.

For the off-line bound: pair $\mathrm{off}$ as $\{\rho,1-\bar\rho\}$ and write $v_\rho=a+ib$ with $a,b\in\RR^d$. Then $m_\rho(v_\rho v_\rho^{\mathsf T}+\ol{v_\rho}\,\ol{v_\rho}^{\mathsf T})=2m_\rho(aa^{\mathsf T}-bb^{\mathsf T})$ is the pull-back of $m_\rho\bigl(\begin{smallmatrix}1&0\\0&-1\end{smallmatrix}\bigr)$ under $x\mapsto(a^{\mathsf T}x,\,b^{\mathsf T}x)$. Summing, $Q$ is the pull-back of $\bigoplus_{\text{pairs}}m_\rho\bigl(\begin{smallmatrix}1&0\\0&-1\end{smallmatrix}\bigr)$, whose positive index is the number of pairs; apply Lemma~\ref{lem:inertia}.
\end{proof}

\begin{proposition}[Trace]\label{prop:trace}
$\tr\Gt=N(I')+O_\chi\bigl(T^{1/2}L^2\bigr)$.
\end{proposition}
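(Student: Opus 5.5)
We compute $\tr\Gt$ directly from its definition, zero by zero, and compare each zero's contribution to $m_\rho$ using the Poisson--Gabor identity (Lemma~\ref{lem:poisson}). Write
\[
\tr\Gt=\frac1{aL^2}\sum_{\re\gamma_\rho\in I'}m_\rho\sum_{0\le k<d}\wh\phi(\gamma_\rho-\alpha_k)^2 .
\]
By Lemma~\ref{lem:poisson} with $z=z'=\gamma_\rho$ we have $\sum_{k\in\ZZ}\wh\phi(\gamma_\rho-\alpha_k)^2=L\,\wh{\phi^2}(0)$ evaluated at difference $z-z'=0$. This holds even for complex $\gamma_\rho$, because the identity is stated for all $z,z'\in\CC$ and involves no conjugation. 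The full sum is therefore exactly $aL^2$ for every zero, on-line or off-line. The per-zero contribution to $\tr\Gt$ is then $m_\rho$ minus the truncation error $(aL^2)^{-1}\sum_{k\notin[0,d)}\wh\phi(\gamma_\rho-\alpha_k)^2$. Summing over $\re\gamma_\rho\in I'$ gives
\[
\tr\Gt=N(I')-\frac1{aL^2}\sum_{\re\gamma_\rho\in I'}m_\rho\sum_{k<0\text{ or }k\ge d}\wh\phi(\gamma_\rho-\alpha_k)^2 .
\]
Here $N(I')$ counts zeros by $\re\gamma_\rho$, which matches the ordinate convention since $\re\gamma_\rho=\gamma$.

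The remaining work is to bound the tail. We use \eqref{eq:PW}. Since $|\im\gamma_\rho|=|\beta-\tfrac12|\le\tfrac12$, the factor $e^{\frac L2|\im z|}$ is at most $e^{L/4}=(T/2\pi)^{1/4}$. Hence
\[
|\wh\phi(\gamma_\rho-\alpha_k)|^2\ll T^{1/2}\min\bigl(L^2,|\re\gamma_\rho-\alpha_k|^{-4}\bigr).
\]
For fixed $\rho$ we sum over $k\notin[0,d)$, that is, over $\alpha_k<T$ or $\alpha_k\ge 2T$, with spacing $2\pi/L$. Let $\delta_\rho:=\dist(\re\gamma_\rho,\RR\setminus[T,2T))$. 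The sum is $\ll T^{1/2}\bigl(L^2\cdot L+L\,\delta_\rho^{-3}\bigr)$, more precisely $\ll T^{1/2}L\min(L^2,\delta_\rho^{-3})$ up to a boundary term. Dividing by $aL^2\asymp L^2$, each zero's error is $\ll T^{1/2}L^{-1}\min(L^2,(1+\delta_\rho)^{-3})$.

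We then sum over zeros in $I'$ using $N(t,t+1)\ll\log t$. Zeros within distance $1$ of the endpoints $T$ or $2T$ contribute $\ll T^{1/2}L^{-1}\cdot L^2\cdot L=T^{1/2}L^2$. The others contribute $\ll T^{1/2}L^{-1}\sum_{j\ge1}L\,j^{-3}\ll T^{1/2}$. This includes the zeros with $\re\gamma_\rho\in I'\setminus I$, whose distance is at least $\delta$ from $[0,d)$ and which give a convergent sum. The $O_\chi(T^{1/2}L^2)$ bound follows.

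The main obstacle is handling the off-line zeros honestly. The factor $e^{L|\im z|/2}$ in \eqref{eq:PW} is what produces the $T^{1/2}$ loss, and it cannot be avoided without a zero-free region, which the paper does not use. One must also check that applying Lemma~\ref{lem:poisson} at complex $z=z'$ gives exactly $L\|\phi\|_2^2$, with no sign or conjugation issue. It does, since $\wh{\phi^2}(0)=\|\phi\|_2^2$ and the products are unconjugated, consistent with $\Gt$ being built from $v_\rho v_\rho^{\mathsf T}$. Everything else is routine bookkeeping with the density bound on zeros.
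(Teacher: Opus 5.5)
You follow the paper's route. The Poisson--Gabor identity at $z=z'=\gamma_\rho$ makes each zero contribute $m_\rho$ minus a tail $(aL^2)^{-1}m_\rho\sum_{k\notin[0,d)}\wh\phi(\gamma_\rho-\alpha_k)^2$, and \eqref{eq:PW} (with the $e^{L/4}$ loss) together with the zero density bounds the tails. For $\re\gamma_\rho\in I$ this works.

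The gap is your treatment of zeros with $\re\gamma_\rho\in I'\setminus I$, which you say ``give a convergent sum''. For such a zero the excluded indices $k<0$ or $k\ge d$ include grid points $\alpha_k$ right next to $\re\gamma_\rho$, so the tail is essentially all of $aL^2$. For example, take an on-line zero with $1\le T-\gamma\le\sqrt T$. It has $\|v_\rho\|_2^2\ll L(T-\gamma)^{-3}$, so its error is $m_\rho(1-O(L^{-1}))$, which is not small. The distance from $\re\gamma_\rho$ to $\{\alpha_k:0\le k<d\}$ that you invoke controls the \emph{in-window} sum, not the tail. Your own $\delta_\rho=\dist(\re\gamma_\rho,\RR\setminus[T,2T))$ is $0$ for these zeros, so your tail bound only gives an error $\ll T^{1/2}L$ per zero. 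Summed over the $\asymp\sqrt T\log T$ zeros in $I'\setminus I$, that is $\asymp TL^2$, which is larger than $N(I)$ itself.

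The repair is to split the zeros.
\begin{itemize}
\item For $\re\gamma_\rho\in I$, keep your tail argument. It gives $N(I)+O(T^{1/2}L^2)$ for their total contribution.
\item For $\re\gamma_\rho\in I'\setminus I$, bound the contribution $m_\rho(aL^2)^{-1}\sum_{0\le k<d}\wh\phi(\gamma_\rho-\alpha_k)^2$ itself, not the tail. Since $\alpha_0,\dots,\alpha_{d-1}\in[T,2T)$, your estimate applies with $D:=\dist(\re\gamma_\rho,I)$ and gives $\ll T^{1/2}L^{-1}\min(L^2,D^{-3})$ per zero. The same counting then shows these zeros contribute $O(T^{1/2}L^2)$ in total.
\end{itemize}
Hence $\tr\Gt=N(I)+O(T^{1/2}L^2)$. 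The bound $N(I')-N(I)\ll\sqrt T\log T$, which follows from $N(t,t+1)\ll\log t$, then gives the proposition.

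The paper's written proof makes the same slip. Its lower bound $|\re\gamma_\rho-\alpha_k|\ge\dist(\re\gamma_\rho,\{T,2T\})$ for $k\notin[0,d)$ holds only for $\re\gamma_\rho\in I$. The zeros in $I'\setminus I$ have to be absorbed through $N(I')-N(I)$ as above, which is harmless for the stated error term.

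Separately, your $\min(L^2,(1+\delta_\rho)^{-3})$ should read $\min(L^2,\delta_\rho^{-3})$; your endpoint count already uses the latter.
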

\begin{proof}
By Lemma~\ref{lem:poisson} at $z=z'=\gamma_\rho$, for each zero with $\re\gamma_\rho\in I'$,
\[
\tr(v_\rho v_\rho^{\mathsf T})=aL^2-\sum_{k\notin[0,d)}\wh\phi(\gamma_\rho-\alpha_k)^2.
\]
For $k\notin[0,d)$ one has $\alpha_k\notin[T,2T)$, hence $|\re\gamma_\rho-\alpha_k|\ge D_\rho:=\dist(\re\gamma_\rho,\{T,2T\})$. By~\eqref{eq:PW} with $|\im\gamma_\rho|<\tfrac12$, $|\wh\phi(\gamma_\rho-\alpha_k)|^2\le e^{L/2}C_\chi^2|\re\gamma_\rho-\alpha_k|^{-4}$, and the $k$-sum is bounded by integral comparison (step $h=2\pi/L$):
\[
\sum_{k\notin[0,d)}|\re\gamma_\rho-\alpha_k|^{-4}\ \le\ 2\Bigl(D_\rho^{-4}+h^{-1}\!\int_{D_\rho}^\infty\!s^{-4}\dd s\Bigr)\ \ll\ L\,\min\bigl(L^3,\,D_\rho^{-3}\bigr).
\]
Dividing by $aL^2\asymp L^2$ and summing over zeros in $I'$ (density $\ll L$; $O(L)$ zeros have $D_\rho<1$, contributing $\ll_\chi X^{1/2}L^2$; the rest satisfy $\sum_\rho D_\rho^{-3}\ll L$, contributing $\ll X^{1/2}\cdot L^{-1}\cdot L=X^{1/2}$),
\[
\bigl|\tr\Gt-N(I')\bigr|\ll_\chi X^{1/2}L^2\ll T^{1/2}L^2=o(N(I')).\qedhere
\]
\end{proof}

\begin{proposition}[Tail]\label{prop:tail}
$\|\Et\|_1\ll_\chi T^{-1/2}$; in particular $\|\Et\|,\|\Et\|_{\HS}\le\|\Et\|_1=o(1)$.
\end{proposition}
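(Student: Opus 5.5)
The plan is to bound the trace norm of $\Et$ by the sum of the trace norms of its rank-one pieces, and then to bound each piece by the Paley--Wiener estimate~\eqref{eq:PW}, using that every zero counted in $\Et$ lies at distance at least $\sqrt T$ from every sample point $\alpha_k$.

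\emph{Reduction to a sum of $\|v_\rho\|^2$.} For any $v\in\CC^d$ the rank-one matrix $vv^{\mathsf T}$ has trace norm $\|v\|_2\,\|\ol v\|_2=\|v\|_2^2$. By the triangle inequality for $\|\cdot\|_1$,
\[
\|\Et\|_1\le\frac1{aL^2}\sum_{\re\gamma_\rho\notin I'}m_\rho\|v_\rho\|_2^2 .
\]
The pairing $\rho\leftrightarrow1-\bar\rho$ is not needed at this step. The inequalities $\|\Et\|\le\|\Et\|_{\HS}\le\|\Et\|_1$ are standard.

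\emph{Pointwise bound.} Fix a zero with $\re\gamma_\rho\notin I'$, and set $D_\rho:=\dist(\re\gamma_\rho,[T,2T])\ge\sqrt T$. Every $\alpha_k$ with $0\le k<d$ lies in $[T,2T)$, so $|\re\gamma_\rho-\alpha_k|\ge D_\rho$. Since $|\im\gamma_\rho|<\tfrac12$, the estimate~\eqref{eq:PW} gives
\[
|\wh\phi(\gamma_\rho-\alpha_k)|^2\ll e^{L/2}\,|\re\gamma_\rho-\alpha_k|^{-4}=X^{1/2}\,|\re\gamma_\rho-\alpha_k|^{-4}.
\]
The factor $X^{1/2}$ is the price paid for off-line zeros; for on-line zeros it is absent. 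Summing over the grid of step $h=2\pi/L$ and comparing with an integral, exactly as in the proof of Proposition~\ref{prop:trace}, gives
\[
\|v_\rho\|_2^2\ll X^{1/2}\Bigl(D_\rho^{-4}+L\int_{D_\rho}^\infty s^{-4}\dd s\Bigr)\ll X^{1/2}L\,D_\rho^{-3}.
\]

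\emph{Summing over zeros.} Sum this bound over all zeros with $\re\gamma_\rho$ outside $I'$, on both sides of the window and including negative ordinates. By Riemann--von Mangoldt the zeros with $\re\gamma_\rho\in[t,t+1]$ number $\ll\log(|t|+2)$ with multiplicity. Hence
\[
\sum_{\re\gamma_\rho\notin I'}m_\rho D_\rho^{-3}\ll\int_{\sqrt T}^\infty\log(T+D)\,D^{-3}\dd D\ll\frac{L}{T},
\]
where the polynomial decay absorbs the logarithmic growth for large $D$. Dividing by $aL^2\asymp L^2$, we get
\[
\|\Et\|_1\ll\frac{X^{1/2}\,L\cdot L/T}{L^2}\ll T^{-1/2}.
\]

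\emph{Main obstacle.} The only delicate point is the exponential factor $e^{L|\im z|/2}$ in~\eqref{eq:PW}. Off-line zeros have $|\im\gamma_\rho|$ up to $\tfrac12$, which costs a factor $X^{1/2}\asymp T^{1/2}$. This loss must be beaten by the $D^{-3}$ decay, and that is exactly why the enlargement $I'$ was taken to have margin $\sqrt T$: the decay integral then yields $T^{-1}$, leaving a net $T^{-1/2}$. I would check that the $|z|^{-2}$ branch of~\eqref{eq:PW} holds with the stated $e^{L|\im z|/2}$ growth uniformly in the strip $|\im z|\le\tfrac12$. This follows from two integrations by parts with $\|\phi''\|_1\ll_\chi1$ and $\supp\phi=[-\tfrac L2,\tfrac L2]$.
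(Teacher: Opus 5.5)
Your proposal is correct and follows the paper's proof step for step. You use the trace-norm triangle inequality with $\|v_\rho v_\rho^{\mathsf T}\|_1=\|v_\rho\|_2^2$, the Paley--Wiener bound at cost $e^{L/2}=X^{1/2}$ giving $\|v_\rho\|_2^2\ll X^{1/2}L\,D^{-3}$, and the short-interval zero count giving $\sum m_\rho D^{-3}\ll L/T$. The only cosmetic difference is that you treat all $D\ge\sqrt T$ with a single integral of $\log(T+D)\,D^{-3}$, whereas the paper splits the zero sum at $|\gamma|\approx 3T$.
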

\begin{proof}
Since $\|v_\rho v_\rho^{\mathsf T}\|_1=\|v_\rho\|_2^2$, $\|\Et\|_1\le(aL^2)^{-1}\sum_{\re\gamma_\rho\notin I'}m_\rho\|v_\rho\|_2^2$. For such $\rho$, $D:=\dist(\re\gamma_\rho,I)\ge D_0=\sqrt T$, and as above $\|v_\rho\|_2^2\le e^{L/2}C_\chi^2\cdot h^{-1}D^{-3}\ll X^{1/2}L\,D^{-3}$. By $N(t,t+1)\ll\log(|t|+3)$ \cite[Thm~9.2]{Tit86},
\[
\sum_{\re\gamma_\rho\notin I'}m_\rho\,D^{-3}\ \ll\ L\!\int_{\sqrt T}^{T}\!D^{-3}\dd D+\sum_{|\gamma|>3T}\frac{\log|\gamma|}{|\gamma|^{3}}\ \ll\ L\,T^{-1}.
\]
Hence $\|\Et\|_1\ll(aL^2)^{-1}\cdot X^{1/2}L\cdot LT^{-1}\ll X^{1/2}T^{-1}\ll T^{-1/2}$.
\end{proof}

\begin{remark}\label{rem:taper}
The $C^2$ smoothing is necessary: for the sharp cut-off $\phi=\one_{[-L/2,L/2]}$ one has only $|\wh\phi(r-iy)|\asymp X^{|y|/2}/|r|$, and the right-hand side of the first display in the proof of Proposition~\ref{prop:tail} is then $\gg X^{1/2}L\log(T/D_0)$, which is not $o(N)$ for any $D_0=T^{1-\eps}$.
\end{remark}

\begin{corollary}\label{cor:count}
$\tr P+2\npl(Q)\le N(I)+O(\sqrt T\log T)$, and $N_0^{*}(I)\ge\rank P-O(\sqrt T\log T)$.
\end{corollary}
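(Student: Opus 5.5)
Both inequalities follow from Proposition~\ref{prop:block}, once we pass from the enlarged window $I'$ to $I$. That passage costs only $N(I'\setminus I)$, the zeros with real ordinate in $[T-\sqrt T,T)\cup[2T,2T+\sqrt T)$.

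By the Riemann--von Mangoldt formula, or simply $N(t,t+1)\ll\log t$ summed over $O(\sqrt T)$ unit intervals, we have $N(I'\setminus I)\ll\sqrt T\log T$. The same bound holds for $N_0^{*}$, $N_0$ and for the number of off-line zeros in $I'\setminus I$, since all are dominated by $N$ counted with multiplicity.

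For the second claim, Proposition~\ref{prop:block} gives $\rank P\le N_0^{*}(I')=N_0^{*}(I)+N_0^{*}(I'\setminus I)$. The error term is $\le N(I'\setminus I)\ll\sqrt T\log T$, which yields $N_0^{*}(I)\ge\rank P-O(\sqrt T\log T)$.

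For the first claim, Proposition~\ref{prop:block} gives $\tr P\le N_0(I')$ and $2\npl(Q)\le\#\mathrm{off}$, where $\mathrm{off}$ is the set of distinct off-line zeros with $\re\gamma_\rho\in I'$. Each such zero has $m_\rho\ge1$, so $\#\mathrm{off}\le\sum_{\rho\in\mathrm{off}}m_\rho$. This sum is exactly the off-line part of $N(I')$, read with ordinates $\re\gamma_\rho=\gamma$, since $\gamma_\rho=(\rho-\frac12)/i$ has real part $\gamma$. Adding the on-line and off-line parts gives
\[
\tr P+2\npl(Q)\le N_0(I')+\textstyle\sum_{\mathrm{off}}m_\rho=N(I')=N(I)+O(\sqrt T\log T).
\]

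The only bookkeeping point is that $I=[T,2T)$ is half-open while $N(T,2T)$ uses $(T,2T]$. These differ by the zeros at the endpoints, which number $O(\log T)$ and are absorbed into the error. Nothing here is a real obstacle; the main step is the observation that $2\cdot\#\{\text{pairs}\}=\#\mathrm{off}$ is bounded by the multiplicity count of off-line zeros.
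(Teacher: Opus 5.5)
Your proposal is correct and follows the paper's own proof essentially verbatim. Both combine Proposition~\ref{prop:block} with $N(I')\ge N_0(I')+\#\mathrm{off}$ (from $m_\rho\ge1$) and the short-interval bound $N(I')-N(I)\ll\sqrt T\log T$, and obtain the rank claim by rearranging $\rank P\le N_0^{*}(I')=N_0^{*}(I)+O(\sqrt T\log T)$.
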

\begin{proof}
$\tr P+2\npl(Q)\le N_0(I')+\#\mathrm{off}\le N(I')=N(I)+O(\sqrt T\log T)$ by Proposition~\ref{prop:block}, since $N(I')=\sum_{\mathrm{on}}m_\rho+\sum_{\mathrm{off}}m_\rho\ge N_0(I')+\#\mathrm{off}$, and $N(I')-N(I)\ll\sqrt T\log T$ by the short-interval bound. The second claim is $\rank P\le N_0^{*}(I')=N_0^{*}(I)+O(\sqrt T\log T)$ rearranged.
\end{proof}

\section{The prime side}\label{sec:prime}

\subsection{Auxiliary estimates}\label{subsec:aux}
We record the auxiliary bounds used in this section. Put $b:=\tfrac1L\int\phi^4$ (so $0<b\le a\le1$), $w:=1$, and
\begin{equation}\label{eq:PhigA}
\Phi:=\wh{\phi^2},\qquad g:=\phi^2\star\phi^2,\qquad A_\phi:=\phi\star\phi,\qquad (v\star v)(y):=\int v(u)v(u+y)\dd u .
\end{equation}
Thus $\wh\phi$ and $\Phi$ are real, even, entire; $\wh\phi(0)\le L$, $\Phi(0)=aL$; the pair-correlation kernel $R:=|\wh\phi|^2$ is nonnegative by construction; $\wh\phi^{\,2}=\wh{A_\phi}$ and $\Phi^2=\wh g$ on $\RR$; $\int_\RR\Phi^2=2\pi g(0)=2\pi bL$; $g$ and $A_\phi$ are even, and since $\one_{[-L/2+w,\,L/2-w]}\le\phi^2\le\phi\le\one_{[-L/2,L/2]}$,
\begin{equation}\label{eq:gbounds}
(L-2w-|y|)_+\ \le\ g(y)\ \le\ A_\phi(y)\ \le\ (L-|y|)_+ .
\end{equation}
Integrating by parts in~\eqref{eq:PW},
\begin{equation}\label{eq:thetadef}
\max\bigl(|\wh\phi(r)|,|\Phi(r)|\bigr)\ \le\ \vartheta(r):=\min\Bigl(L,\ \frac2{|r|},\ \frac{C_\chi}{w r^2}\Bigr)\qquad(r\in\RR),
\end{equation}
and a direct computation (split the integrand at $|r|=2/L$, $|r|=C_\chi/2$) gives
\begin{equation}\label{eq:thetaints}
\Theta_0:=\int_0^\infty\!\vartheta(r)\dd r=4+2\log\frac{C_\chi L}{4w}\ll\log L,\ \ \int_\RR\!\vartheta(r)^2|r|\dd r=8+8\log\frac{C_\chi L}{4w}\ll\log L,\ \ \int_\RR\!\vartheta^2\le 8L.
\end{equation}

\begin{lemma}\label{lem:cheb}\textup{(\cite[\S2.2]{MV07})}
For $x\ge2$,
\begin{gather}
\sum_{n\le x}\Lambda(n)\ll x,\quad \sum_{n\le x}\frac{\Lambda(n)}{\sqrt n}\le 3\sqrt x\ \ (x\ge x_0),\quad \sum_{n\le x}\frac{\Lambda(n)}{\sqrt n\,\log n}\ll\frac{\sqrt x}{\log x},\quad \sum_{n\le x}\Lambda(n)^2\ll x\log x,\label{eq:cheb1}\\
\sum_{n\le x}\frac{\Lambda(n)^2}{n}=\frac{(\log x)^2}2+O(\log x),\qquad
\sum_{n\le x}\frac{\Lambda(n)^2}{n}\,(\log x-\log n)=\frac{(\log x)^3}{6}+O\bigl((\log x)^2\bigr).\label{eq:cheb2}
\end{gather}
\end{lemma}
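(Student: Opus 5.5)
The plan is to derive every estimate from two classical inputs by partial summation. The inputs are Chebyshev's bound in the explicit form $\psi(x):=\sum_{n\le x}\Lambda(n)<1.04\,x$ of Rosser--Schoenfeld, and Mertens' estimate $\sum_{n\le x}\Lambda(n)/n=\log x+O(1)$ (\cite[\S2.2]{MV07}). Throughout we write $\sum_{n\le x}a_n f(n)=A(x)f(x)-\int_1^xA(t)f'(t)\dd t$ with $A(t)=\sum_{n\le t}a_n$.

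\emph{The four bounds in \eqref{eq:cheb1}.}
\begin{itemize}
\item The first bound is Chebyshev's.
\item For the second, take $a_n=\Lambda(n)$ and $f(t)=t^{-1/2}$. This gives
\[
\sum_{n\le x}\frac{\Lambda(n)}{\sqrt n}=\frac{\psi(x)}{\sqrt x}+\frac12\int_1^x\psi(t)t^{-3/2}\dd t\le 1.04\sqrt x+1.04\sqrt x<3\sqrt x .
\]
The Rosser--Schoenfeld constant already suffices here. Alternatively, the prime number theorem gives the sharper $(2+o(1))\sqrt x$, which is why only $x\ge x_0$ is claimed.
\item For the third, take $f(t)=t^{-1/2}(\log t)^{-1}$ on $n\ge2$. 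The boundary term is $\ll\sqrt x/\log x$. The integral is $\ll\int_2^x t^{-1/2}(\log t)^{-1}\dd t\ll\sqrt x/\log x$, as one sees by splitting at $\sqrt x$.
\item The fourth follows from $\Lambda(n)^2\le\Lambda(n)\log x$ for $n\le x$ together with Chebyshev.
\end{itemize}

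\emph{The first asymptotic in \eqref{eq:cheb2}.} First replace $\Lambda(n)^2$ by $\Lambda(n)\log n$. For $n=p^k$ one has $\Lambda(n)^2=(\log p)^2$, whereas $\Lambda(n)\log n=k(\log p)^2$. The two agree for $k=1$, and the total discrepancy is bounded by $\sum_p\sum_{k\ge2}k(\log p)^2p^{-k}<\infty$. So it suffices to evaluate $\sum_{n\le x}\Lambda(n)\log n/n$. Apply partial summation with $a_n=\Lambda(n)/n$, $A(t)=\log t+O(1)$ (Mertens) and $f=\log$:
\[
A(x)\log x-\int_1^x\frac{\log t+O(1)}{t}\dd t=(\log x)^2-\frac{(\log x)^2}{2}+O(\log x)=\frac{(\log x)^2}2+O(\log x).
\]

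\emph{The second asymptotic in \eqref{eq:cheb2}.} With $S(t):=\sum_{n\le t}\Lambda(n)^2/n$, use the identity $\sum_{n\le x}\frac{\Lambda(n)^2}{n}(\log x-\log n)=\int_1^xS(t)\frac{\dd t}{t}$. Inserting $S(t)=\tfrac12(\log t)^2+O(\log t)$ gives $\tfrac16(\log x)^3+O((\log x)^2)$.

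I expect no serious obstacle. The only point needing care is the explicit constant $3$ in the second bound of \eqref{eq:cheb1}: it requires an explicit Chebyshev constant below $3/2$ in $\psi(x)\le c\,x$ (or the prime number theorem), not merely $\psi(x)\ll x$. The prime-power bookkeeping in the first asymptotic of \eqref{eq:cheb2} is routine.
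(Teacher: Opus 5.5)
Your proof is correct and follows the paper's route: the paper obtains all six estimates by partial summation from Mertens' estimate $\sum_{n\le x}\Lambda(n)/n=\log x+O(1)$, as you do. Your one addition is an explicit Chebyshev constant, $\sum_{n\le x}\Lambda(n)\le 1.04\,x$ (or any constant below $\tfrac32$ for large $x$). The explicit constant $3$ in the second bound genuinely needs this, since Mertens with an unspecified $O(1)$ gives only $\sum_{n\le x}\Lambda(n)/\sqrt n\ll\sqrt x$.
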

\noindent(Each follows from $\sum_{n\le x}\Lambda(n)/n=\log x+O(1)$ by partial summation; see \cite[Theorem~2.7]{IK04}.) In particular, writing $a_n:=\Lambda(n)n^{-1/2}$, \eqref{eq:deltan} gives $\sum_{n\le X}a_n^2\delta_n^{-1}\le2\sum_{n\le X}\Lambda(n)^2\ll XL$; and by~\eqref{eq:mufacts}, \eqref{eq:PiPfacts} and~\eqref{eq:cheb1},
\begin{equation}\label{eq:Bdef}
|\nu_X(\tau)|\le B+\log^+\tfrac{|\tau|}{4T}\ \ (\tau\in\RR),\quad |\nu_X(\tau)|\le B\ \ (|\tau|\le4T),\quad B:=l+4\sqrt X,\quad B^2\ll l^2+X.
\end{equation}

\subsection{Reduction to a double integral}
By~\eqref{eq:Gprime}, $(aL^2)^2\|\Gt+\Et\|_{\HS}^2=\iint_{\RR^2}K(\tau,\tau')^2\nu_X(\tau)\nu_X(\tau')\dd\tau\dd\tau'$, where
\begin{equation}\label{eq:Kdef}
K(\tau,\tau'):=\sum_{0\le k<d}\wh\phi(\tau-\alpha_k)\wh\phi(\tau'-\alpha_k)=L\,\Phi(\tau-\tau')-K_{\mathrm{out}}(\tau,\tau'),
\end{equation}
$K_{\mathrm{out}}$ denoting the sum over $k\notin[0,d)$; by Lemma~\ref{lem:poisson} and~\eqref{eq:thetadef},
\begin{equation}\label{eq:Kbounds}
|K|,\ L|\Phi|\le aL^2,\qquad |K_{\mathrm{out}}(\tau,\tau')|\le\sum_{k\notin[0,d)}\vartheta(\tau-\alpha_k)\vartheta(\tau'-\alpha_k).
\end{equation}

\begin{proposition}[Reduction to the double integral]\label{prop:reduction}
\[
\|\Gt+\Et\|_{\HS}^2=\frac1{a^2L^2}\iint_{I\times I}\wh{\phi^2}(\tau-\tau')^2\,\nu_X(\tau)\nu_X(\tau')\dd\tau\dd\tau'+O_\chi\Bigl(\frac{N\log L}{L^2}\Bigr).
\]
\end{proposition}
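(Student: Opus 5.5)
Starting from the identity $(aL^2)^2\|\Gt+\Et\|_{\HS}^2=\iint K^2\nu_X\nu_X$ with $K=L\Phi(\tau-\tau')-K_{\mathrm{out}}$ from~\eqref{eq:Kdef}, I make two replacements and bound each error: first $K^2$ by $L^2\Phi(\tau-\tau')^2$, then the domain $\RR^2$ by $I\times I$. Dividing by $(aL^2)^2$ turns $L^2\Phi^2$ into the stated main term $\frac1{a^2L^2}\iint\Phi^2\nu_X\nu_X$. The target error after normalisation is $O(N\log L/L^2)$, so before normalising I need $O(NL^2\log L)=O(TL^3\log L)$.

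\emph{Step 1 (removing $K_{\mathrm{out}}$).} Write $K^2-L^2\Phi^2=-K_{\mathrm{out}}(2L\Phi-K_{\mathrm{out}})$. By~\eqref{eq:Kbounds}, $|2L\Phi-K_{\mathrm{out}}|\le|K|+L|\Phi|$, and this is at most $L\vartheta(\tau-\tau')$ plus a second $K_{\mathrm{out}}$-type sum. The error is therefore controlled by
\[
\iint\sum_{k\notin[0,d)}\vartheta(\tau-\alpha_k)\vartheta(\tau'-\alpha_k)\,L\vartheta(\tau-\tau')\,|\nu_X(\tau)\nu_X(\tau')|,
\]
together with the analogous $K_{\mathrm{out}}^2$ term. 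The factor $\vartheta(\tau-\alpha_k)$ forces $\tau$ to lie near $\alpha_k$, which is outside $I$.

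The naive bound $|\nu_X|\le B\asymp\sqrt X$ is too lossy here. I would instead split $\nu_X=\mu+(\Pi_X+P_X)$. The $\mu$ part is harmless, since $\mu\ll l$ on bounded ranges and grows only logarithmically. The oscillatory part $\Pi_X+P_X$ should be handled by expanding it as a Dirichlet polynomial and integrating in $\tau,\tau'$ first. This converts the error into bilinear forms in $\Lambda(n)n^{-1/2}$ with kernels given by Fourier transforms of $\phi$-products, which Lemma~\ref{lem:MV} with~\eqref{eq:deltan} controls by $\sum\Lambda(n)^2\ll XL$.

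Summing over $k$, the boundary effect involves only $O(1)$ effective values of $k$, each weighted by $\Theta_0\ll\log L$ from~\eqref{eq:thetaints}. Together with the smooth $\mu$ contribution of size $L^2\cdot T$, this should yield $O(TL^3\log L)$.

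\emph{Step 2 (restricting to $I\times I$).} With $K$ replaced by $L\Phi$, the integrand $L^2\Phi(\tau-\tau')^2\nu_X(\tau)\nu_X(\tau')$ is concentrated near the diagonal. The region where $\tau\notin I$ but $\tau$ is within a bounded distance of $I$ contributes only $O(L^2\cdot L\cdot l^2)$ from the $\mu$ part, which is negligible. The region far from $I$ along the diagonal is different: there the full kernel $K$ is itself tiny, because both $\vartheta(\tau-\alpha_k)$ and $\vartheta(\tau'-\alpha_k)$ are small. So I would organise the argument not as ``replace $K$, then truncate'' but as follows. On $(I\times I)^c$, bound the contribution of $K^2$ directly by the Step~1 estimate. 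On $I\times I$, use $K=L\Phi-K_{\mathrm{out}}$, where $K_{\mathrm{out}}$ is small except near $\partial I$.

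The off-diagonal decay $\vartheta(\tau-\tau')^2\ll(\tau-\tau')^{-4}$, combined with the logarithmic growth of $\nu_X$ from~\eqref{eq:Bdef}, makes the far tails converge.

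\emph{Main obstacle.} The hard part is controlling the $\Pi_X+P_X$ contributions without losing the factor $\sqrt X$. In particular, the near-boundary products $\nu_X(\tau)\nu_X(\tau')$ with $\tau,\tau'$ within $O(1)$ of $T$ or $2T$ must be handled by integrating in the variable along the boundary layer before taking absolute values. I would first try to write each error term as $\sum_{m,n\le X}a_ma_n\,\widehat{W}(\log m-\log n)$ for a kernel $W$ with total mass $O(L\log L)$. Then I would apply Lemma~\ref{lem:MV} to the off-diagonal $m\ne n$, and use~\eqref{eq:cheb2} on the diagonal $m=n$, which gives $\sum\Lambda(n)^2/n\ll L^2$. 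Multiplied by the weights, this should stay within $O(TL^3\log L)$.
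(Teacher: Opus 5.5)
\textbf{Verdict: genuine gap.} Your final organisation is exactly the paper's split: bound $K^2$ directly on $(I\times I)^c$, and write $K=L\Phi-K_{\mathrm{out}}$ on $I\times I$. The paper calls these pieces $\mathcal E_2:=\iint_{(I\times I)^c}K^2\nu_X(\tau)\nu_X(\tau')$ and $\mathcal E_1:=\iint_{I\times I}(K^2-L^2\Phi^2)\nu_X(\tau)\nu_X(\tau')$. Your first version of Step~1, over all of $\RR^2$, cannot work, since $\iint_{\RR^2}\Phi(\tau-\tau')^2|\nu_X(\tau)\nu_X(\tau')|\dd\tau\dd\tau'=\infty$. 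But neither error term is actually estimated, and the mechanism you propose rests on a misdiagnosis: the factor $\sqrt X$ is affordable.

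Since $B^2\ll X\asymp T$ and your budget is $TL^3\log L$, it suffices to bound the $L^1$ mass of the kernel error by $O(L^3\log L)$; the paper simply uses $|\nu_X|\le B$ on $I$. By \eqref{eq:Kbounds}, $|K^2-L^2\Phi^2|=|K_{\mathrm{out}}|\,|K+L\Phi|\le 2aL^2|K_{\mathrm{out}}|$. Hence $|\mathcal E_1|\le 2L^2B^2\sum_{k\notin[0,d)}\bigl(\int_I\vartheta(\tau-\alpha_k)\dd\tau\bigr)^2$, and this sum is $\ll L\log L$. It is not $O(1)$ terms of size $\Theta_0$, as you claim. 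The spacing is $h=2\pi/L$, so $\asymp L$ grid points lie within distance $O(1)$ of $\partial I$. The paper bounds the $j$-th term by $\min(\Theta_0,C_\chi/(jh))^2$ and splits at $j\approx C_\chi/(h\Theta_0)$. That factor $L$ is exactly what fills the budget.

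The Dirichlet-polynomial route would gain nothing here. The error kernels are localised within $O(1)$ of $\partial I$ in each variable. So they are not functions of $\tau-\tau'$, and they do not produce sums $\sum a_ma_n\wh W(\log m-\log n)$. In any case, Lemma~\ref{lem:MV} bounds things in terms of $\sum_n a_n^2\delta_n^{-1}$, which \eqref{eq:deltan} makes $O(XL)$. That is the same factor $X$ you wanted to avoid; compare $\mathcal O_1\ll L^2X$ in Proposition~\ref{prop:PP}.

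On $(I\times I)^c$ the paper again uses only $|\nu_X|\le B+\log^+\frac{|\tau|}{4T}$. It combines this with $K^2\le aL^2|K|\le L^2\sum_{k<d}\vartheta(\tau-\alpha_k)\vartheta(\tau'-\alpha_k)$ and gets $|\mathcal E_2|\ll L^3B^2l\log L$. After dividing by $(aL^2)^2$ this is $O(N\log L/L)$, a factor $L$ weaker than the displayed error term. This is consistent with the $\log\log T/\log T$ rate in Remark~\ref{rem:rate}. The loss comes from the kernel, not from $\nu_X$: $\sigma=\sum_{k<d}\vartheta(\cdot-\alpha_k)$ decays only like $L/\Delta$ off $I$, so $\int_{\tau\notin I}\sigma$ picks up a factor $\log T$.

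To reach the stated $L^{-2}$, work with $K^2$ itself. By Plancherel and orthogonality of $e^{2\pi iku/L}$ on $[-\frac L2,\frac L2]$,
$\int_\RR K(\tau,\tau')^2\dd\tau'=2\pi\int\phi(u)^2\bigl|\sum_{0\le k<d}\wh\phi(\tau-\alpha_k)e^{2\pi iku/L}\bigr|^2\dd u\le 2\pi L\sum_{0\le k<d}\wh\phi(\tau-\alpha_k)^2$.
For $\tau$ at distance $\Delta$ from $I$, this last sum is $\le\vartheta(\Delta)^2+h^{-1}\int_\Delta^\infty\vartheta^2$. By \eqref{eq:thetaints} it integrates to $\ll L\log L$ over $\tau\notin I$. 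The growth of $\nu_X$ for $|\tau|>4T$ is absorbed by the decay $\vartheta(\Delta)^2\le C_\chi^2\Delta^{-4}$. Using $|\nu_X(\tau)\nu_X(\tau')|\le\frac12\bigl(\nu_X(\tau)^2+\nu_X(\tau')^2\bigr)$ and the symmetry of $K$, this gives $|\mathcal E_2|\ll B^2L^2\log L$, well inside the budget. Together with the bound on $\mathcal E_1$, this proves the proposition as displayed, using only the crude bound on $\nu_X$ throughout.
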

\begin{proof}
Write $(aL^2)^2\|\Gt+\Et\|_{\HS}^2$ as $\iint_{I\times I}K_\infty^2\nu\nu'+\mathcal E_1+\mathcal E_2$ with $\nu:=\nu_X(\tau)$, $\nu':=\nu_X(\tau')$,
$\mathcal E_1:=\iint_{I\times I}(K^2-K_\infty^2)\nu\nu'$ and $\mathcal E_2:=\iint_{(I\times I)^c}K^2\nu\nu'$; note $\iint_{I\times I}K_\infty^2\nu\nu'=L^2\mathcal M$.

\emph{Bound for $\mathcal E_1$.} By \eqref{eq:Kbounds}, $|K^2-K_\infty^2|=|K_{\mathrm{out}}|\,|K+K_\infty|\le2aL^2|K_{\mathrm{out}}|$, and $|\nu|,|\nu'|\le B$ on $I$ by \eqref{eq:Bdef}. Hence
\[
|\mathcal E_1|\le2L^2B^2\sum_{k\notin[0,d)}\Bigl(\int_I\vartheta(\tau-\alpha_k)\dd\tau\Bigr)^2 .
\]
For $k=-j$ ($j\ge1$) we have $\dist(\alpha_k,I)=jh$; for $k=d+1+j$ ($j\ge0$) we have $\alpha_k\ge2T+jh$ because $T+(d+1)h>2T$. Since $\int_{\Delta}^\infty\vartheta\le\min(\Theta_0,C_\chi/(w\Delta))\le\min(\Theta_0,C_\chi/\Delta)$ for $\Delta>0$, we get
\[
\sum_{k\notin[0,d)}\Bigl(\int_I\vartheta(\tau-\alpha_k)\dd\tau\Bigr)^2\le 2(2\Theta_0)^2+2\sum_{j\ge1}\min\Bigl(\Theta_0,\frac{C_\chi}{jh}\Bigr)^2\le 10\Theta_0^2+\frac{6C_\chi\Theta_0}{h}\ll L\log L,
\]
where the term $2(2\Theta_0)^2$ accounts for $k=d,d+1$ (for which we only use $\int_\RR\vartheta=2\Theta_0$), the points $\alpha_{d+1+j}$, $j\ge1$, being at distance $\ge jh$ to the right of $I$; the $j$-sum was split at $j=C_\chi/(h\Theta_0)$, and \eqref{eq:thetaints} was used. Thus $|\mathcal E_1|\ll L^3B^2\log L$.

\emph{Bound for $\mathcal E_2$.} By symmetry $|\mathcal E_2|\le2\int_{\tau\notin I}\int_{\tau'\in\RR}K^2|\nu\nu'|$. Using $K^2\le aL^2|K|\le L^2\sum_{k<d}\vartheta(\tau-\alpha_k)\vartheta(\tau'-\alpha_k)$,
\[
|\mathcal E_2|\le2L^2\sum_{k<d}\Bigl(\int_{\tau\notin I}\vartheta(\tau-\alpha_k)|\nu_X(\tau)|\dd\tau\Bigr)\Bigl(\int_\RR\vartheta(\tau'-\alpha_k)|\nu_X(\tau')|\dd\tau'\Bigr).
\]
In the second factor, the range $|\tau'-\alpha_k|\le2T$ has $|\tau'|\le4T$ and contributes at most $2\Theta_0B$; on $|\tau'-\alpha_k|=:r>2T$ we have $|\tau'|\le2r$, $|\nu_X(\tau')|\le B+\log(r/T)$ and $\vartheta(r)\le C_\chi r^{-2}$, contributing $\ll B/T$. So the second factor is $\le3\Theta_0B$ uniformly in $k$. The sum over $k$ of the first factor equals $\int_{\tau\notin I}|\nu_X(\tau)|\sigma(\tau)\dd\tau$ with $\sigma(\tau):=\sum_{k<d}\vartheta(\tau-\alpha_k)$. For $\tau\notin I$ let $\Delta:=\dist(\tau,I)$; the numbers $|\tau-\alpha_k|$, $0\le k<d$, are $\ge\Delta,\ \ge\Delta+h,\ \ge\Delta+2h,\dots$ in increasing order, and $\vartheta$ is decreasing, so
\[
\sigma(\tau)\le\sum_{j\ge0}\vartheta(\Delta+jh)\le\vartheta(\Delta)+\frac1h\int_\Delta^\infty\vartheta\le\vartheta(\Delta)+\frac L{2\pi}\min\Bigl(\Theta_0,\frac{C_\chi}\Delta\Bigr),\qquad \sigma(\tau)\le d\,\vartheta(\Delta)\le\frac{dC_\chi}{\Delta^2}.
\]
On $\Delta\le2T$ (where $|\tau|\le4T$, $|\nu_X|\le B$) this gives $\int|\nu_X|\sigma\le2B\bigl[\Theta_0+\frac L{2\pi}\bigl(C_\chi+C_\chi\log\frac{2T\Theta_0}{C_\chi}\bigr)\bigr]\le 2B(\Theta_0+C_\chi L\log T)$; on $\Delta>2T$ (where $|\tau|\le2\Delta$, $|\nu_X(\tau)|\le B+\log(\Delta/T)$) it gives $\le2\int_{2T}^\infty dC_\chi(B+\log(\Delta/T))\Delta^{-2}\dd\Delta\ll C_\chi LB$. Hence $\int_{\tau\notin I}|\nu_X|\sigma\ll BLl$ and $|\mathcal E_2|\ll L^2\cdot\Theta_0B\cdot BLl\ll L^3B^2l\log L$.

\end{proof}

\subsection{Evaluation of $\mathcal M$}
For functions $u_1,u_2$ on $I$ write
\[
\mathcal M[u_1,u_2]:=\iint_{I\times I}\Phi(\tau-\tau')^2u_1(\tau)u_2(\tau')\dd\tau\dd\tau',
\]
a symmetric bilinear form ($\Phi^2$ is even), so that
\begin{equation}\label{eq:Msplit}
\mathcal M=\mathcal M[\mu,\mu]+\mathcal M[P_X,P_X]+2\mathcal M[\mu,P_X]+2\mathcal M[\mu,\Pi_X]+2\mathcal M[P_X,\Pi_X]+\mathcal M[\Pi_X,\Pi_X].
\end{equation}
Recall $\int_\RR\Phi^2=2\pi bL$, $\int_\RR\Phi(x)^2|x|\dd x\ll\log L$ (by \eqref{eq:thetadef}, \eqref{eq:thetaints}) and, by Fourier inversion of $\Phi^2=\wh g$ ($g$ even, continuous, compactly supported),
\begin{equation}\label{eq:Phi2FT}
\int_\RR\Phi(x)^2e^{ixy}\dd x=2\pi g(y)\qquad(y\in\RR).
\end{equation}

\begin{proposition}[Archimedean term]\label{prop:mumu}
$\mathcal M[\mu,\mu]=2\pi bL\int_T^{2T}\mu^2+O(l^2\log L)=\dfrac{bLT\ell_1^2}{2\pi}\bigl(1+O(l^{-2})\bigr)+O(l^2\log L)$.
\end{proposition}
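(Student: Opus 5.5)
We need $\mathcal M[\mu,\mu]=\iint_{I\times I}\Phi(\tau-\tau')^2\mu(\tau)\mu(\tau')\,d\tau\,d\tau'$. The plan is to replace $\mu(\tau')$ by $\mu(\tau)$, since $\mu$ varies slowly and $\Phi^2$ is concentrated near the diagonal at scale $O(1)$. Concretely, we write $\mu(\tau')=\mu(\tau)+(\mu(\tau')-\mu(\tau))$, which gives
\[
\mathcal M[\mu,\mu]=\int_I\mu(\tau)^2\Bigl(\int_{I}\Phi(\tau-\tau')^2\dd\tau'\Bigr)\dd\tau+\iint_{I\times I}\Phi(\tau-\tau')^2\mu(\tau)\bigl(\mu(\tau')-\mu(\tau)\bigr)\dd\tau\dd\tau'.
\]

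For the first term, we complete the inner integral to $\int_\RR\Phi^2=2\pi bL$. The missing part is $\int_I\mu(\tau)^2\int_{\tau'\notin I}\Phi(\tau-\tau')^2\dd\tau'\,\dd\tau$. With $D:=\dist(\tau,\RR\setminus I)$, the inner tail is at most $\int_{|x|>D}\vartheta(x)^2\dd x$. Since $\mu\ll l$ on $I$ by \eqref{eq:mufacts}, Fubini bounds the whole error by
\[
\ll l^2\int_\RR\vartheta(x)^2|x|\dd x\ll l^2\log L,
\]
using \eqref{eq:thetaints} and the fact that the set of $\tau$ with $D<|x|$ has measure $\le2|x|$. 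This yields $2\pi bL\int_T^{2T}\mu^2+O(l^2\log L)$.

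For the second term, we use $|\mu(\tau')-\mu(\tau)|\le|\tau-\tau'|\sup_{[T,2T]}|\mu'|\ll|\tau-\tau'|/T$ from \eqref{eq:mufacts}, together with $|\mu|\ll l$. The term is then bounded by
\[
\ll\frac lT\int_I\int_\RR\Phi(x)^2|x|\dd x\,\dd\tau\ll l\log L,
\]
which is absorbed in the error. An alternative is to symmetrise: the cross term equals $-\tfrac12\iint\Phi^2(\mu(\tau')-\mu(\tau))^2\le0$, which gives an even smaller bound of $O(\log L/T)$. Either route works.

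Finally, we insert \eqref{eq:muints}, $\int_T^{2T}\mu^2=\frac{T\ell_1^2}{4\pi^2}(1+O(l^{-2}))$. Multiplying by $2\pi bL$ gives $\frac{bLT\ell_1^2}{2\pi}(1+O(l^{-2}))$, which is the second expression in the statement.

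The only delicate step is the edge completion. We must make sure the bound uses the first moment $\int\vartheta^2|x|$, which costs only a $\log L$, rather than a crude bound of size $L$ from $\int\vartheta^2\le8L$. This is handled by the Fubini argument above: the edge region where the tail matters has width comparable to $|x|$.
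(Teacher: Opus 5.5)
Your proposal is correct and matches the paper's proof essentially step for step. The paper likewise freezes $\mu$ at one variable using $\mu'\ll 1/T$, with error $O(l\log L)$ via $\int\Phi^2|x|\ll\log L$, and completes the inner integral to $\int_\RR\Phi^2=2\pi bL$ with edge loss at most $l^2\int\Phi(x)^2\min(|x|,T)\,dx\ll l^2\log L$ (your Fubini/measure-$2|x|$ argument); it then inserts \eqref{eq:muints}. Your symmetrisation of the cross term is a valid, sharper alternative but is not needed.
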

\begin{proof}
For $\tau,\tau'\in I$, $\mu(\tau)=\mu(\tau')+O(|\tau-\tau'|/T)$ by \eqref{eq:mufacts}, and $0<\mu\le l$ on $I$. Hence
\[
\mathcal M[\mu,\mu]=\int_I\mu(\tau')^2\Bigl(\int_{I-\tau'}\Phi(x)^2\dd x\Bigr)\dd\tau'+O\Bigl(\frac lT\int_I\!\!\int_\RR\Phi(x)^2|x|\dd x\dd\tau'\Bigr).
\]
The error is $O(l\log L)$. In the main term, $\int_{I-\tau'}\Phi^2=2\pi bL-\int_{x<T-\tau'}\Phi^2-\int_{x>2T-\tau'}\Phi^2$, and
$\int_I\mu(\tau')^2\int_{x<T-\tau'}\Phi(x)^2\dd x\dd\tau'\le l^2\int_{x<0}\Phi(x)^2\min(|x|,T)\dd x\ll l^2\log L$, similarly for the other piece. The second form follows from \eqref{eq:muints}.
\end{proof}

\begin{proposition}[Prime term]\label{prop:PP}
\[
\mathcal M[P_X,P_X]=\frac T\pi\sum_{n\le X}\frac{\Lambda(n)^2}{n}\,g(\log n)+O_\chi(L^2X),
\qquad g:=\phi^2*\phi^2.
\]
\end{proposition}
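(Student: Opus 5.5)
Expanding $P_X(\tau)P_X(\tau')$ and using $\cos A\cos B=\tfrac12[\cos(A-B)+\cos(A+B)]$ gives
\[
\mathcal M[P_X,P_X]=\frac1{2\pi^2}\sum_{m,n\le X}\frac{\Lambda(m)\Lambda(n)}{\sqrt{mn}}\iint_{I\times I}\Phi(\tau-\tau')^2\bigl[\cos(\tau\log m-\tau'\log n)+\cos(\tau\log m+\tau'\log n)\bigr]\dd\tau\dd\tau'.
\]
We substitute $x=\tau-\tau'$ with $\tau'\in I$. The integrand in $x$ is localised: $\int\Phi^2|x|\ll\log L$ by \eqref{eq:thetaints}. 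So we first extend the $x$-integral to all of $\RR$ while keeping $\tau'\in I$. This costs, for each pair $(m,n)$, an amount $\ll\int\Phi(x)^2\min(|x|,T)\dd x\ll\log L$. Summed against $\sum_{m,n}\Lambda(m)\Lambda(n)/\sqrt{mn}\ll X$ by \eqref{eq:cheb1}, the total error is $O(X\log L)$, well inside $O(L^2X)$.

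After extension, the $\cos(A-B)$ term has the form
\[
\int_I\re\Bigl(e^{i\tau'\log(m/n)}\int_\RR\Phi(x)^2e^{ix\log m}\dd x\Bigr)\dd\tau'=2\pi g(\log m)\int_I\cos(\tau'\log(m/n))\dd\tau'
\]
by \eqref{eq:Phi2FT}. The $\cos(A+B)$ term is the same with $\log(mn)$ in place of $\log(m/n)$.

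The diagonal $m=n$ of the difference term gives $|I|\cdot2\pi g(\log n)\cdot\Lambda(n)^2/n$. With the prefactor $1/(2\pi^2)$ this is exactly $\frac T\pi\sum_n\frac{\Lambda(n)^2}{n}g(\log n)$, the main term.

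For the off-diagonal $m\ne n$ we use
\[
\int_I\cos(\tau'\lambda)\dd\tau'=\re\frac{e^{2iT\lambda}-e^{iT\lambda}}{i\lambda},\qquad \lambda=\log m-\log n.
\]
This produces bilinear forms $\sum_{m\ne n}x_m\ol{z_n}/(\lambda_m-\lambda_n)$, with $x_m=a_m g(\log m)e^{ic T\log m}$ and $z_n=a_ne^{icT\log n}$ (using $|g|\le L$). Lemma~\ref{lem:MV} together with \eqref{eq:deltan} bounds these by
\[
\ll\Bigl(L^2\sum a_n^2\delta_n^{-1}\Bigr)^{1/2}\Bigl(\sum a_n^2\delta_n^{-1}\Bigr)^{1/2}\ll L\cdot XL=L^2X.
\]
This step is where Montgomery--Vaughan is essential. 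A trivial bound would give $X^2$, since the denominators $\log(m/n)$ can be as small as $1/X$.

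The sum term, with frequency $\log(mn)\ge\log 2$, is handled directly. After integration it is $\ll\sum_{m,n}a_ma_n L/\log(mn)\ll L(\sum_n a_n)^2/\log 2$, which is $O(LX)$ by \eqref{eq:cheb1}. One point needs care: $g(\log m)$ is real only because $g$ is even, which lets the $e^{ix\log m}$ integral be taken against either sign.

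The main obstacle is the extension step. The error must be organised as a single bound summed over all pairs, not handled termwise with oscillation; the crude bound is enough. We also have to check that the off-diagonal weights $g(\log m)$ remain in a form to which Lemma~\ref{lem:MV} applies. They do, because the bilinear version of the lemma allows the two coefficient sequences $x$ and $z$ to differ.
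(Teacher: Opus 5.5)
Your proof is correct and follows the paper's argument in all essentials: the diagonal main term comes from \eqref{eq:Phi2FT}, the sum frequencies $\log(mn)$ are bounded trivially, and the difference frequencies are handled by Lemma~\ref{lem:MV} with \eqref{eq:deltan}. The one deviation is that you first extend the $x$-integral to all of $\RR$, at the crude cost $\ll(\sum_n a_n)^2\int\Phi(x)^2\min(|x|,T)\dd x\ll X\log L$; this gives the exact weights $2\pi g(\log m)$ everywhere and leaves two bilinear sums. The paper instead keeps the exact range $\tau'\in[T+x^-,2T-x^+]$ and so works with the truncated transforms $\alpha_n^{\pm}$ and four bilinear sums, with both routes' errors absorbed in $O_\chi(L^2X)$.
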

\begin{proof}
Write $P_X(\tau)=-\frac1{2\pi}\sum_na_n(n^{i\tau}+n^{-i\tau})$. Then
$P_X(\tau)P_X(\tau')=\frac1{2\pi^2}\re\sum_{n,m}a_na_m\bigl[n^{i\tau}m^{-i\tau'}+n^{i\tau}m^{i\tau'}\bigr]$. Substituting $\tau=\tau'+x$ and noting that for fixed $x$ the variable $\tau'$ ranges over $I\cap(I-x)$, which is empty for $|x|\ge T$ and equals $[T+x^-,\,2T-x^+]$ for $|x|<T$ ($x^{\pm}:=\max(\pm x,0)$), we obtain
\begin{equation}\label{eq:MPP}
\begin{split}
\mathcal M[P_X,P_X]&=\frac1{2\pi^2}\re\sum_{n,m}a_na_m\int_{-T}^{T}\Phi(x)^2n^{ix}\Bigl[\int_{T+x^-}^{2T-x^+}\Bigl(\frac nm\Bigr)^{i\tau'}\dd\tau'+\int_{T+x^-}^{2T-x^+}(nm)^{i\tau'}\dd\tau'\Bigr]\dd x\\
&=:\mathcal D+\mathcal O_1+\mathcal O_2,
\end{split}
\end{equation}
where $\mathcal D$ collects the terms $n=m$ of the first inner integral, $\mathcal O_1$ the terms $n\ne m$ of the first inner integral, and $\mathcal O_2$ all terms of the second.

\emph{$\mathcal D$.} For $n=m$ the first inner integral is $T-|x|$, so by \eqref{eq:Phi2FT} and $\sum_na_n^2\ll L^2$ (Lemma~\ref{lem:cheb}),
\[
\mathcal D=\frac1{2\pi^2}\sum_na_n^2\Bigl(T\cdot2\pi g(y_n)+O\Bigl(\int\Phi(x)^2(|x|+T\one_{|x|>T})\dd x\Bigr)\Bigr)=\frac T\pi\sum_na_n^2g(y_n)+O(L^2\log L),
\]
using $\int_{|x|>T}\Phi^2\le\int\Phi^2|x|/T$.

\emph{$\mathcal O_2$.} Here $|\int_{T+x^-}^{2T-x^+}(nm)^{i\tau'}\dd\tau'|\le2/\log(nm)\le2/\log4$ for all $n,m$, so $|\mathcal O_2|\le\frac1{2\pi^2}\bigl(\sum_na_n\bigr)^2\cdot2\pi bL\cdot\frac2{\log 4}\ll XL$ by \eqref{eq:cheb1}.

\emph{$\mathcal O_1$.} For $n\ne m$ put $\vartheta:=y_n-y_m\ne0$. The first inner integral equals $\bigl[(n/m)^{i(2T-x^+)}-(n/m)^{i(T+x^-)}\bigr]/(i\vartheta)$, and
$n^{ix}(n/m)^{-ix^+}$ equals $m^{ix}$ for $x>0$ and $n^{ix}$ for $x<0$, while $n^{ix}(n/m)^{ix^-}$ equals $n^{ix}$ for $x>0$ and $m^{ix}$ for $x<0$. With
\[
\alpha_n^{+}:=\int_0^T\Phi(x)^2n^{ix}\dd x,\qquad \alpha_n^{-}:=\int_{-T}^0\Phi(x)^2n^{ix}\dd x,\qquad |\alpha_n^\pm|\le\pi bL\le\pi L,
\]
we therefore get
\[
\mathcal O_1=\frac1{2\pi^2}\re\sum_{n\ne m}\frac{a_na_m}{i(y_n-y_m)}\Bigl[\Bigl(\frac nm\Bigr)^{2iT}(\alpha_m^++\alpha_n^-)-\Bigl(\frac nm\Bigr)^{iT}(\alpha_n^++\alpha_m^-)\Bigr].
\]
This is a combination of four sums of the shape $\sum_{n\ne m}x_n\ol{z_m}/(y_n-y_m)$ with $\{|x_n|,|z_n|\}=\{a_n,\ a_n|\alpha_n^{\pm}|\}$; for instance the first is $\sum_{n\ne m}(a_nn^{2iT})\ol{(a_m m^{2iT}\ol{\alpha_m^+})}/(y_n-y_m)$. By Lemma~\ref{lem:MV} and \eqref{eq:deltan} each of the four is at most $\frac{3\pi}2\cdot\pi L\cdot\sum_na_n^2/\delta_n\ll L^2X$. Hence $|\mathcal O_1|\ll L^2X$.

Finally, by \eqref{eq:gbounds} and \eqref{eq:cheb2} (note $a_n^2=\Lambda(n)^2/n$),
\[
\frac{(L-2w)^3}6+O(L^2)=\!\!\sum_{n\le Xe^{-2w}}\!\!a_n^2(L-2w-y_n)\le\sum_{n\le X}a_n^2g(y_n)\le\sum_{n\le X}a_n^2(L-y_n)=\frac{L^3}6+O(L^2),
\]
and $(L-2w)^3=L^3-6wL^2+O(w^2L)$ with $1\le w\le L/8$, which gives the second form.
\end{proof}

\begin{proposition}[Cross terms]\label{prop:cross}
$\mathcal M[\mu,P_X],\ \mathcal M[\mu,\Pi_X],\ \mathcal M[P_X,\Pi_X],\ \mathcal M[\Pi_X,\Pi_X]\ \ll_\chi\ L^2\sqrt X$.
\end{proposition}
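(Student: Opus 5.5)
All four bounds come from the same substitution used in Proposition~\ref{prop:PP}. Put $\tau=\tau'+x$, so that
\[
\mathcal M[u_1,u_2]=\int_{-T}^{T}\Phi(x)^2\int_{T+x^-}^{2T-x^+}u_1(\tau'+x)\,u_2(\tau')\dd\tau'\dd x .
\]
We then bound the inner $\tau'$-integral uniformly in $x$ and use $\int_\RR\Phi^2=2\pi bL\ll L$. On $I$ we have the pointwise inputs $0<\mu\le l$, $\mu'\ll 1/T$ from \eqref{eq:mufacts}, and $|\Pi_X|\le 3\sqrt X/T$ from \eqref{eq:PiPfacts}. For $P_X$ we expand $P_X(\tau)=-\frac1{2\pi}\sum_n a_n(n^{i\tau}+n^{-i\tau})$ and exploit oscillation in $\tau'$.

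\emph{The trivial terms.} For $\mathcal M[\mu,\Pi_X]$ the inner integral is at most $T\cdot l\cdot 3\sqrt X/T$, so the whole term is $\ll L\,l\sqrt X\ll L^2\sqrt X$. For $\mathcal M[\Pi_X,\Pi_X]$ the inner integral is at most $T\cdot 9X/T^2$, giving $\ll LX/T\ll L$. No cancellation is needed in either case.

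\emph{The term $\mathcal M[\mu,P_X]$.} For each $n\ge2$ we integrate $\mu(\tau'+x)\,n^{\pm i\tau'}$ over an interval of length $\le T$ inside $I$ by parts once. The boundary terms are $\ll l/\log n$. The remaining integral is $\ll (\log n)^{-1}\int_I|\mu'|\ll 1/\log n$. So the inner integral is $\ll \sum_n a_n\, l/\log n\ll l\sqrt X/\log X\ll\sqrt X$ by \eqref{eq:cheb1}, and multiplying by $\int\Phi^2\ll L$ gives $\ll L\sqrt X$.

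\emph{The term $\mathcal M[P_X,\Pi_X]$ (main obstacle).} Here the trivial bound is only $LX$, because $|P_X|$ can be as large as $\sqrt X$. We must therefore use the oscillation of the product. Writing $\Pi_X(\tau')=\frac1{2\pi}\bigl(X^{1/2+i\tau'}/(\tfrac12+i\tau')+\text{c.c.}\bigr)$, each product term has the form
\[
a_n\sqrt X\,\frac{e^{i\tau'\log(X^{\pm1}n^{\pm1})}}{\tfrac12\pm i\tau'}
\]
times a unimodular factor depending on $x$. We integrate by parts in $\tau'$; the amplitude $1/(\tfrac12+i\tau')$ has size $1/T$ and total variation $\ll1/T$ on $I$. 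This bounds the inner integral by
\[
\ll a_n\sqrt X\min\Bigl(1,\ \frac{1}{T\,|\log(X^{\pm1}n^{\pm1})|}\Bigr).
\]
The phases $\log(Xn)$, and $\log(X/n)$ for $n\le X/2$, are $\ge\log 2$, so these terms contribute $\ll\sqrt X\sum_n a_n/T\ll X/T\ll1$. For $X/2<n\le X$ we use $\log(X/n)\ge (X-n)/X$ and $X/T=1/(2\pi)$, which gives $\min\bigl(1,(X-n)^{-1}\bigr)$ up to constants. Since $a_n\ll L/\sqrt X$ in this range, we get $\sqrt X\sum_{X/2<n\le X}a_n\min(1,(X-n)^{-1})\ll L\log X$. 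The case $n=X$ is covered by the trivial bound $1$. Multiplying by $\int\Phi^2\ll L$ gives $\ll L^3\ll L^2\sqrt X$.

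The one point to check is the near-resonant range $n\approx X$, where the decay in $|\log(X/n)|$ must be summed against the pointwise bound on $a_n$. All other steps are routine.
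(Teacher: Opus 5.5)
Your proof is correct. For three of the four terms it follows the paper.
\begin{itemize}
\item $\mathcal M[\mu,\Pi_X]$ and $\mathcal M[\Pi_X,\Pi_X]$: the same sup bounds as the paper.
\item $\mathcal M[\mu,P_X]$: a single integration by parts using $\int_I|\mu'|\ll1$ and $\sum_{n\le X}a_n/\log n\ll\sqrt X/L$. The difference from the paper is only cosmetic. The paper first smooths, setting $m(\tau'):=\int_I\Phi(\tau-\tau')^2\mu(\tau)\dd\tau$ and integrating $m(\tau')\cos(\tau'\log n)$ by parts. You instead fix $x=\tau-\tau'$ and integrate $\mu(\tau'+x)\,n^{\pm i\tau'}$ by parts. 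Both give $\ll l\sqrt X$.
\end{itemize}

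The real difference is $\mathcal M[P_X,\Pi_X]$. The paper treats it together with the two trivial terms, inserting $|P_X|\le\sqrt X$ and $|\Pi_X|\le3\sqrt X/T$. As you point out, this only gives $6\pi bLX$, which is of order $LT$ and not $\ll L^2\sqrt X$.

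Your argument integrates by parts in $\tau'$ against the amplitude $(\tfrac12\pm i\tau')^{-1}$. The off-resonant phases, namely $\pm\log(nX)$ and $\pm\log(X/n)$ with $n\le X/2$, contribute $O(X/T)=O(1)$ to the inner integral. The near-resonant range $X/2<n\le X$ is controlled by $\log(X/n)\ge(X-n)/X$, $T/X=2\pi$ and $\Lambda(n)\le L$. This gives $\mathcal M[P_X,\Pi_X]\ll L^3$, and so it is what actually proves Proposition~\ref{prop:cross} as stated.

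What the paper's shortcut buys is brevity. The weaker bound $O(LX)$ is harmless for the paper's purposes: it is dominated by the $O_\chi(L^2X)$ error already present in Proposition~\ref{prop:PP} and is absorbed into the $O_\chi(TL^2)$ of Theorem~\ref{thm:HS}, so the main theorems are unaffected. For the proposition itself, however, your oscillation step is needed.
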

\begin{proof}
Let $m(\tau'):=\int_I\Phi(\tau-\tau')^2\mu(\tau)\dd\tau$ for $\tau'\in\RR$. Then $0\le m\le l\cdot2\pi bL$, $m$ is $C^1$, and differentiating under the integral and integrating by parts in $\tau$,
\[
m'(\tau')=-\int_I\mu(\tau)\,\partial_\tau\bigl[\Phi(\tau-\tau')^2\bigr]\dd\tau=\mu(T)\Phi(T-\tau')^2-\mu(2T)\Phi(2T-\tau')^2+\int_I\mu'(\tau)\Phi(\tau-\tau')^2\dd\tau,
\]
so that $\int_I|m'|\le 2l\int\Phi^2+T\cdot O(T^{-1})\int\Phi^2\ll lL$. For $y\ge\log2$, integrating by parts,
$\bigl|\int_Im(\tau')\cos(\tau'y)\dd\tau'\bigr|\le(2\sup|m|+\int_I|m'|)/y\ll lL/y$. Therefore
\[
|\mathcal M[\mu,P_X]|=\Bigl|\frac1\pi\sum_na_n\int_Im(\tau')\cos(\tau'y_n)\dd\tau'\Bigr|\ll lL\sum_{n\le X}\frac{a_n}{\log n}\ll lL\frac{\sqrt X}{L}=l\sqrt X
\]
by \eqref{eq:cheb1}. Next, on $I$ we have $|\Pi_X|\le3\sqrt X/T$, $0<\mu\le l$, $|P_X|\le\sqrt X$ (by \eqref{eq:PiPfacts}, \eqref{eq:cheb1}, $T\ge T_0$), and $\sup_{\tau}\int_I\Phi(\tau-\tau')^2\dd\tau'\le2\pi bL$; the remaining three bounds follow by inserting these sup bounds into the definition of $\mathcal M[\cdot,\cdot]$ (an integral over a region of $\tau'$-length $T$).
\end{proof}

\subsection{The window constant}
\begin{lemma}[Window constant]\label{lem:Rpsi}
For even $\psi\in C([-\tfrac12,\tfrac12])$, $\psi>0$, let
\begin{equation}\label{eq:Rpsi}
R(\psi):=\frac{\int_{-1/2}^{1/2}\psi^2+\int_{-1/2}^{1/2}\!\!\int_{-1/2}^{1/2}|u-v|\,\psi(u)\psi(v)\dd u\dd v}{\bigl(\int_{-1/2}^{1/2}\psi\bigr)^2}.
\end{equation}
Then $R(\psi_0)=\tfrac43$ and $R(\psi_{\mathrm{MT}})=\tfrac12+\tfrac1{\sqrt2}\cot\tfrac1{\sqrt2}=c_{\mathrm{MT}}^{-1}$.
\end{lemma}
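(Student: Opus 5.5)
The plan is to compute $R(\psi)$ directly from \eqref{eq:Rpsi} for each window. For $\psi_0$ this is immediate. For $\psi_{\mathrm{MT}}$ I would avoid evaluating the double integral by brute force and instead use that the kernel $|u-v|$ is twice a Green's function for $d^2/du^2$. This collapses the numerator to a single boundary-type constant.

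\emph{The indicator.} For $\psi_0=\one_{[-1/2,1/2]}$ we have $\int\psi_0^2=\int\psi_0=1$. The mean of $|u-v|$ over the unit square is $\iint_{[-1/2,1/2]^2}|u-v|\dd u\dd v=\tfrac13$. Hence $R(\psi_0)=1+\tfrac13=\tfrac43$.

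\emph{The Montgomery--Taylor window.} Put $c:=\sqrt2$ and $x:=c/2=1/\sqrt2$, and set
\[
h(u):=\int_{-1/2}^{1/2}|u-v|\cos(cv)\dd v\qquad\bigl(u\in[-\tfrac12,\tfrac12]\bigr).
\]
\begin{itemize}
\item Since $\partial_u^2|u-v|=2\delta(u-v)$, we get $h''=2\cos(cu)$ on $[-\tfrac12,\tfrac12]$. Because $h$ is even, this forces $h(u)=-\tfrac{2}{c^2}\cos(cu)+A=-\cos(cu)+A$, using $c^2=2$.
\item Evaluating at $u=\tfrac12$ fixes the constant: $A=h(\tfrac12)+\cos x$. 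Moreover $h(\tfrac12)=\int(\tfrac12-v)\cos(cv)\dd v=\tfrac12 S$, where $S:=\int_{-1/2}^{1/2}\cos(cv)\dd v=\tfrac{2}{c}\sin x=\sqrt2\sin x$. The odd part $\int v\cos(cv)\dd v$ vanishes.
\item The double integral in \eqref{eq:Rpsi} is $\int\psi h=-\int\psi^2+AS$. So the numerator is $\int\psi^2+\int\psi h=AS$, and the $\int\psi^2$ term cancels exactly. This cancellation is precisely where the choice $c=\sqrt2$ enters.
\end{itemize}
Therefore
\[
R(\psi_{\mathrm{MT}})=\frac{AS}{S^2}=\frac AS=\frac12+\frac{\cos x}{\sqrt2\sin x}=\frac12+\frac1{\sqrt2}\cot\frac1{\sqrt2}=c_{\mathrm{MT}}^{-1}.
\]

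The only point needing care is the Green's-function step: justifying $h''=2\psi$ and recovering the constant $A$ from a boundary value. Both are elementary, since $|u-v|$ is piecewise linear and $\psi$ is continuous. No analytic input from earlier sections is needed, as the lemma is purely a computation about the window.
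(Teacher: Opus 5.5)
Your proposal is correct and follows the paper's proof: the paper also uses $G(u)=\psi(u)+\int_{-1/2}^{1/2}|u-v|\psi(v)\,dv$, which satisfies $G''=\psi''+2\psi=0$, so it is even and affine, hence constant, and $R(\psi_{\mathrm{MT}})=G/\int\psi$ follows. The only difference is cosmetic: the paper evaluates the constant at $u=0$, getting $\cos\tfrac1{\sqrt2}+\tfrac1{\sqrt2}\sin\tfrac1{\sqrt2}$, while you evaluate it at the endpoint $u=\tfrac12$ and obtain the same value for your $A$.
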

\begin{proof}
For $\psi_0$: $\int\psi_0=1$, $\int\psi_0^2=1$, $\iint|u-v|=2\int_0^1 w(1-w)\dd w=\tfrac13$, so $R=\tfrac43$. For $\psi_{\mathrm{MT}}$: the function $G(u):=\psi(u)+\int_{-1/2}^{1/2}|u-v|\psi(v)\dd v$ has $G''=\psi''+2\psi=0$ on $(-\tfrac12,\tfrac12)$, hence is affine there, hence (being even) constant; $G(0)=1+2\int_0^{1/2}v\cos(\sqrt2\,v)\dd v=\cos\tfrac1{\sqrt2}+\tfrac1{\sqrt2}\sin\tfrac1{\sqrt2}$ and $\int\psi=\sqrt2\sin\tfrac1{\sqrt2}$, so $R(\psi_{\mathrm{MT}})=G(0)/\!\int\psi=\tfrac12+\tfrac1{\sqrt2}\cot\tfrac1{\sqrt2}$ after integrating $G\equiv G(0)$ against $\psi$.
\end{proof}

\begin{theorem}\label{thm:HS}
$\|\Gt\|_{\HS}^2=\bigl(R(\psi)+O_\chi(L^{-1})\bigr)\,N(T,2T)$.
\end{theorem}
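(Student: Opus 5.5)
The argument assembles the pieces of \S\ref{sec:prime}. First, I pass from $\|\Gt+\Et\|_{\HS}^2$ to $\|\Gt\|_{\HS}^2$ using Proposition~\ref{prop:tail}. Since $\|\Gt\|_{\HS}\le\|\Gt+\Et\|_{\HS}+\|\Et\|_{\HS}$ and $\|\Et\|_{\HS}\ll T^{-1/2}$, it suffices to show $\|\Gt+\Et\|_{\HS}^2=(R(\psi)+O(L^{-1}))N$. The cross term is then $O(\sqrt N\,T^{-1/2})=O(\sqrt L)$, which is negligible.

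Next I apply Proposition~\ref{prop:reduction}. With $\mathcal M:=\mathcal M[\nu_X,\nu_X]$, this gives $\|\Gt+\Et\|_{\HS}^2=\mathcal M/(a^2L^2)+O(N\log L/L^2)$. I expand $\mathcal M$ by \eqref{eq:Msplit} and treat the terms in turn.
\begin{itemize}
\item Proposition~\ref{prop:mumu} gives $\mathcal M[\mu,\mu]=bLT\ell_1^2/(2\pi)\,(1+O(l^{-2}))$.
\item Proposition~\ref{prop:PP} gives $\mathcal M[P_X,P_X]=\frac T\pi\sum_{n\le X}a_n^2g(\log n)+O(L^2X)$. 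Comparing via \eqref{eq:gbounds} and \eqref{eq:cheb2}, the sum equals $L^3/6+O(L^2)$.
\item Proposition~\ref{prop:cross} bounds the four cross terms by $O(L^2\sqrt X)$.
\end{itemize}
Since $X=T/2\pi$, every error term is $O(TL^2)$, and the two main terms are of order $TL^3$. Using $\ell_1=L+O(1)$, I obtain
\[
\mathcal M=\frac{TL^3}{2\pi}\Bigl(b+\frac13\Bigr)+O(TL^2).
\]
Dividing by $a^2L^2$ and using $N=N(T,2T)=\frac{TL}{2\pi}(1+O(L^{-1}))$ gives
\[
\|\Gt+\Et\|_{\HS}^2=\frac{b+\frac13}{a^2}\,N\,(1+O(L^{-1})).
\]

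The step I expect to be the main obstacle is identifying $(b+\tfrac13)/a^2$ with $R(\psi)$, and doing it correctly. Here $a=\int\psi+O(L^{-1})$ and $b=\frac1L\int\phi^4=\int\psi^2+O(L^{-1})$, since $\phi^4=\psi(u/L)^2$ off transition intervals of length $O(1)$. The numerator of $R(\psi)$, however, contains $\iint|u-v|\psi(u)\psi(v)$, which equals $\tfrac13$ only when $\psi=\psi_0$. This shows that the crude evaluation $g(y)\approx L-|y|$ in the final step of Proposition~\ref{prop:PP} is valid only for the indicator window. For a general window I would redo that step with the exact profile
\[
g(y)=L\!\int\psi(s)\psi(s+y/L)\,ds+O(1),
\]
and evaluate $\frac TL\sum_n a_n^2 g(\log n)$ by partial summation from $\sum_{n\le x}\Lambda(n)^2/n=\frac12\log^2x+O(\log x)$. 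After the substitution $t=y/L$, this becomes $\frac{TL^2}{\pi}\int_0^1 t\,(\psi\star\psi)(t)\,dt+O(TL)$. Since $\int_0^1 t(\psi\star\psi)(t)\,dt=\frac12\iint|u-v|\psi(u)\psi(v)$, the resulting main term is $\frac{TL^3}{2\pi}\bigl(\int\psi^2+\iint|u-v|\psi\psi\bigr)$. The error in $g$ is $O(1)$ uniformly, contributing $O(TL^2)$.

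Dividing by $a^2L^2\cdot\frac{TL}{2\pi}$ then yields exactly $R(\psi)+O(L^{-1})$, by the definition \eqref{eq:Rpsi}. The values $\tfrac43$ and $c_{\mathrm{MT}}^{-1}$ follow from Lemma~\ref{lem:Rpsi}. I would double-check the bookkeeping in the $C^2$ transition regions of $\phi$, which affect $a$, $b$ and $g$ only at relative order $L^{-1}$.
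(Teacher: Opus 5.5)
Your proposal is correct and follows the paper's proof essentially step for step. You remove the tail via Proposition~\ref{prop:tail}, route the rest through Propositions~\ref{prop:reduction}--\ref{prop:cross}, and, exactly as the paper does, evaluate the prime diagonal with the exact profile $g(y)=L(\psi\star\psi)(y/L)+O(1)$ and partial summation instead of \eqref{eq:gbounds}, which, as you rightly note, only yields $\tfrac13$ for $\psi_0$.

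One bookkeeping slip: your intermediate line is off by a factor of $L$. It should read $\frac T\pi\sum_n a_n^2g(\log n)=\frac{TL^3}{\pi}\int_0^1t\,(\psi\star\psi)(t)\,dt+O(TL^2)$. The main term you end with, $\frac{TL^3}{2\pi}\bigl(\int\psi^2+\iint|u-v|\psi\psi\bigr)$, is nonetheless correct.
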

\begin{proof}
By Propositions~\ref{prop:reduction}--\ref{prop:cross}, $\|\Gt+\Et\|_{\HS}^2=(a^2L^2)^{-1}\bigl(\mathcal M[\mu,\mu]+\mathcal M[P_X,P_X]+O(L^2\sqrt X)\bigr)+O(N\log L/L^2)$. Rescaling $\phi^2(u)=\psi(u/L)+O(\one_{|u|>L/2-1})$ gives $\|\phi\|_2^2=L\!\int\psi+O_\chi(1)$, $\|\phi\|_4^4=L\!\int\psi^2+O_\chi(1)$, $g(y)=L\,(\psi*\psi)(y/L)+O_\chi(1)$ uniformly; so $\sum_{n\le X}\tfrac{\Lambda(n)^2}ng(\log n)=L\sum_n\tfrac{\Lambda(n)^2}n(\psi*\psi)(\tfrac{\log n}L)+O_\chi(L^2)$, and by partial summation with $\sum_{n\le t}\Lambda(n)^2/n=\tfrac12\log^2t+O(\log t)$ this equals $L^3\!\int_0^1\!w\,(\psi*\psi)(w)\dd w+O_\chi(L^2)$. Thus
\[
\mathcal M[\mu,\mu]+\mathcal M[P_X,P_X]
=\frac{TL^3}{2\pi}\Bigl(\textstyle\int\psi^2+2\!\int_0^1\!w(\psi*\psi)\Bigr)+O_\chi(TL^2)
=\frac{TL^3}{2\pi}\,R(\psi)\Bigl(\textstyle\int\psi\Bigr)^{\!2}+O_\chi(TL^2)
\]
by Lemma~\ref{lem:Rpsi} (since $2\int_0^1w(\psi*\psi)=\iint|u-v|\psi\psi$, $\psi$ even). Dividing by $a^2L^2=L^2(\int\psi)^2+O_\chi(L)$:
\[
\|\Gt+\Et\|_{\HS}^2=R(\psi)\cdot\frac{TL}{2\pi}+O_\chi(T)=R(\psi)\,N(T,2T)+O_\chi(N/L).
\]
Finally $\bigl|\|\Gt\|_{\HS}^2-\|\Gt+\Et\|_{\HS}^2\bigr|\le2\|\Gt+\Et\|_{\HS}\|\Et\|_{\HS}+\|\Et\|_{\HS}^2\ll\sqrt N\cdot T^{-1/2}$ by Proposition~\ref{prop:tail}.
\end{proof}

\section{Proofs of Theorems~\ref{thm:main} and~\ref{thm:L}}\label{sec:proofs}

\begin{proof}[Proof of Theorem~\ref{thm:main}]
Set $P_1:=(aL^2)^{-1}\sum_{\rho\in\mathrm{on}_1}v_\rho v_\rho^{\mathsf T}$ (the simple on-line zeros) and $Q':=\Gt-P_1$. Then $P_1\succeq0$, $\rank P_1\le s_1$, $\tr P_1\le s_1$ (Lemma~\ref{lem:poisson}, each $m_\rho=1$); and $Q'$ is the pull-back of $\bigoplus_{\rho\in\mathrm{on}_{\ge2}}m_\rho|x|^2\oplus\bigoplus_{\text{pairs}}\bigl(\begin{smallmatrix}0&m_\rho\\m_\rho&0\end{smallmatrix}\bigr)$, so $\npl(Q')\le s_2+p$ by Lemma~\ref{lem:inertia}. Lemma~\ref{lem:ranktrace} with $(P_1,Q')$ and $\tr Q'=\tr\Gt-\tr P_1$ gives
\[
\rank P_1\ge2\tr P_1+4(\tr\Gt-\tr P_1)-4(s_2+p)-\|\Gt\|_{\HS}^2=4\tr\Gt-2\tr P_1-4(s_2+p)-\|\Gt\|_{\HS}^2,
\]
and since $\tr P_1+2(s_2+p)\le s_1+2s_2+2p\le N(I')$ this is $\ge4\tr\Gt-2N(I')-\|\Gt\|_{\HS}^2$. Now $\tr\Gt=N(I')+o(N)$ (Proposition~\ref{prop:trace}), $\|\Gt\|_{\HS}^2=(R(\psi)+o(1))N$ (Theorem~\ref{thm:HS}), $N(I')=N(I)+O(D_0L)$, and $N_0^{s}(I)=s_1-O(D_0L)\ge\rank P_1-O(D_0L)$, giving~\eqref{eq:chain}.

For~(ii): rearranging the display above, $3s_1+4(s_2+p)\ge4\tr\Gt-\|\Gt\|_{\HS}^2$; subtracting $s_1+2s_2+2p\le N(I')$ gives $2(s_1+s_2+p)\ge(3-R(\psi)-o(1))N$, and $N_d(I')\ge s_1+s_2+2p\ge s_1+s_2+p$. The cumulative form follows by summing over dyadic windows.
\end{proof}

\begin{proof}[Proof of Theorem~\ref{thm:L}]
Replace $\zeta$ by $L(s,\chi)$, $\chi$ primitive mod~$q$: in~\eqref{eq:EF}, $\mu$ gains $-\tfrac{\log q}{2\pi}$ and the $\Gamma$-factor shifts to $\tfrac{1+a_\chi}4+\tfrac{i\tau}2$, $\Lambda(n)$ becomes $\Lambda(n)\chi(n)$, and the pole terms are absent for $\chi\ne\chi_0$. The functional equation still pairs $\rho\leftrightarrow1-\bar\rho$. Propositions~\ref{prop:block}--\ref{prop:cross} and Theorem~\ref{thm:HS} go through with $O_q(\cdot)$ errors ($|\chi|\le1$); the chain~\eqref{eq:chain} is unchanged.
\end{proof}

\begin{remark}[Rate; the parameter $\lambda$]\label{rem:rate}
Tracking errors gives $N_0^{s}(T,2T)\ge(2-R(\psi)-c_\psi\tfrac{\log\log T}{\log T})N(T,2T)$ for $T\ge T_0(\psi,\chi)$, the $\log\log T$ from Proposition~\ref{prop:reduction}. If instead $L:=\lambda l$ for fixed $0<\lambda<1$ (so $X=(T/2\pi)^\lambda$), the argument gives $2-R_\lambda(\psi)$ with $R_\lambda(\psi_0)=\tfrac1\lambda+\tfrac\lambda3$, i.e.\ $H(\lambda):=2-\tfrac1\lambda-\tfrac\lambda3$ in place of~$\tfrac23$, and the $\log\log T$ may be dropped. No $\lambda<1$ improves the constants.
\end{remark}

\section{Relation to prior work; sharpness}\label{sec:variants}

\subsection{Relation to prior work on the pair-correlation second moment}\label{subsec:GS}
In the notation of \cite{GS25,GS26}, the unconditional ``Fej\'er kernel sum over zero differences'' $\sum_{\rho,\rho'}\bigl(\frac{\sin\frac i2(\rho-\rho')\log T}{\frac i2(\rho-\rho')\log T}\bigr)^2W(\rho-\rho')=(\frac43+o(1))\frac T{2\pi}\log T$ (\cite[Cor.~1.4]{Ary22}; \cite[Theorem~1]{BGSTB24}; \cite[Lemma~2]{GS26}) is, up to the choice of smoothing, the zero-side reading of our $\|\Gt\|_{\HS}^2$. Goldston and Suriajaya \cite[Theorem~2]{GS25} (see also \cite{GLSS25} under the pair correlation conjecture) show that an unconditional bound $\sum_{\rho,\rho'\in\mathcal Z(T),\,\gamma=\gamma'}1\le(C+o(1))N(T)$ with $C<2$ (zeros repeated according to multiplicity) would give proportions $\ge2-C$ of simple zeros and $\ge2-C$ of zeros on the line, and that $C=\frac43$ would follow if the terms $\gamma\ne\gamma'$ of the Fej\'er sum could be discarded, which requires a positivity that fails for zeros far from the line; in \cite{BGSTB25,GS26} this is achieved under the hypothesis that all zeros lie within $o(1/\log T)$ of the line (Aryan~\cite[Cor.~1.5]{Ary22} had earlier obtained $\ge\tfrac23$ simple zeros under a zero-density hypothesis). Theorem~\ref{thm:main} is an unconditional substitute which reaches their number $2-C=\frac23$ both for zeros on the line (as distinct points) and for simple zeros on the line: instead of isolating the ``horizontal diagonal'' $\gamma=\gamma'$ termwise, Lemma~\ref{lem:ranktrace} bounds the rank of the on-line (resp.\ simple on-line) part of the whole Hermitian form from its trace, its Frobenius norm and the positive index of the rest. For simple zeros on the line we thus obtain $2-C=\frac23$ unconditionally; this is the constant that \cite[Theorem~3(i)]{GS25} gives under the hypothesis \cite[(6.1)]{GS25} on the diagonal and symmetric-diagonal terms, and hence under the box hypothesis \cite[Theorem~4]{GS25}, \cite[Theorem~1]{GS26}; for distinct zeros Theorem~\ref{thm:main}(ii) gives $\tfrac{3-C}2=\frac56$ unconditionally, which is the value of the average of the simple and critical proportions in \cite[Theorem~3(ii)]{GS25} and the RH-conditional constant of \cite[(1.2)]{CGG98}.

That the \emph{negative} index of finite truncations of Weil's form equals the number of off-line zero pairs seen by the truncation was observed by Bombieri \cite{Bom00} (Introduction); see also Yoshida \cite{Yos92} for the positivity of $W$ on small support. We are not aware of a previous use of the positive index or of the rank in combination with a second-moment evaluation.

\subsection{Sharpness and limits of the method}\label{subsec:limits}
(a) \emph{Dimension.} $\rank P,\ \npl(\Gt)\le d=N(1+o(1))$, so the ceiling of any argument of the present kind is $100\%$. The restriction $L\asymp\log T$ (equivalently $X\le T$) comes from Proposition~\ref{prop:PP}: for $X\gg T$ the off-diagonal prime sum is no longer dominated by the diagonal, and its evaluation would require information on prime pairs (the Hardy--Littlewood conjectures, or equivalently Montgomery's pair correlation conjecture for support $>1$ \cite{Mon73,GM87}). In the extreme hypothetical configuration in which all zeros in $[T,2T]$ are off-line pairs, Proposition~\ref{prop:block} gives $\rank P=0$, $\npl(Q)\le N/2$, while Lemma~\ref{lem:ranktrace} would then force $\|\Gt\|_{\HS}^2\ge2N>\frac43N$; the contradiction shows only that at least two thirds of the zeros are not of this kind, which is what Theorem~\ref{thm:main} says. Nothing in the method distinguishes between ``two thirds'' and ``all''.

(b) \emph{Sharpness of the three counts.} Given only $\tr$, $\|\cdot\|_{\HS}^2$, the block structure and $\tr P_1\le s_1$, the inequalities of \S\ref{sec:proofs} are sharp in all three parts: $\frac23N$ mutually orthogonal simple on-line zeros together with $\frac16N$ on-line doubles realise $\tr=N$, $\|\cdot\|_{\HS}^2=\frac43N$, $s_1=\frac23N$, $N_d=\frac56N$---the same extremal configuration as in Montgomery's RH argument. Replacing the doubles by off-line pairs of depth $\to0$ (spectrally the same) gives the extremal for Theorem~\ref{thm:main}, with $s_1+s_2=\frac23N$; for that configuration the Theorem~\ref{thm:main}(i) certificate gives exactly $\frac23$ while Theorem~\ref{thm:main}(ii)'s is not sharp ($N_d=N$), consistent with \S\ref{subsec:arenot}.

(c) \emph{The two levels of integrality.} Lemma~\ref{lem:ranktrace} with the decomposition $(P,Q)$ of Proposition~\ref{prop:block} recovers the level $(m-1)^2\ge0$; with the regrouping $(P_1,Q')$ of the proof of Theorem~\ref{thm:main}(i) it recovers also the level $(m-1)(m-2)\ge0$---interacting simple zeros, whose eigenvalues fill $(0,2)$, are harmless because they sit on the rank side, where only $\rank P_1\le s_1$ and $\tr P_1\le s_1$ are used, while every multiple on-line point and every off-line pair sits on the index side at the flat charge $4$. This is the device of \cite[(1.2)]{CGG98} made unconditional.

What can still be varied is the window $\psi$ (Lemma~\ref{lem:Rpsi}) and the number of moments used. The dimension bound $\rank P\le d=N(T,2T)+O(1)$ is the trivial ceiling referenced in (f); it is decisive in combination with the following observations, which we state informally. (d) Lemma~\ref{lem:ranktrace} at $c=1$ is the case $m=1$ of the one-sided Chebyshev--Markov--Stieltjes bound: if the normalised moments $d^{-1}\tr\Gt^k$, $k\le2m$, are known, the sharp lower bound for $\npl(\Gt)/d$ is $1-\Lambda_m(0)$, $\Lambda_m$ the Christoffel function of the moment sequence at $0$. (e) The prime-side evaluation of $\tr\Gt^k$ by the diagonal method of Section~\ref{sec:prime} (multiplicative relations among $k$ prime powers, Montgomery--Vaughan for the rest) is available exactly in the Rudnick--Sarnak range \cite{RS96} $X^k\le T^{2-\eps}$; at $X\asymp T$ this allows only $k=1$. Thus, unconditionally, higher moments add nothing. (f) \emph{Conditionally}, let $\mathrm{HL}^*(k_0)$ denote the hypothesis that for all $k\le k_0$, $\tr\Gt^k=d\,m_k(1)(1+o(1))$, where $m_k(1)$ is the $k$-th moment of the limiting spectral distribution of the sine-kernel Gram matrix over the sine process (for $k=4$ this encodes a Hardy--Littlewood-type asymptotic for the additive correlations $\sum_m(\Lambda\!\ast\!\Lambda)(m)(\Lambda\!\ast\!\Lambda)(m+h)$, $|h|\le X^2/T$). One computes $m_k(1)=1,\tfrac43,2,\tfrac{13}4$ for $k\le4$, so $\Lambda_2(0;1)=\tfrac5{36}$ and $\mathrm{HL}^*(4)$ would give $\liminf N_0^{s}(T,2T)/N(T,2T)\ge\frac{13}{18}$, and $\mathrm{HL}^*(k_0)$ for all $k_0$ would give proportion $1$ (of simple zeros on the line), the ceiling of (a); RH itself is out of reach of the mechanism. This is complementary to \cite{GLSS25}, where the pair correlation conjecture with full support yields $100\%$ simple zeros on the line.

\paragraph*{The bandwidth-one ceiling.}
The $\tfrac23$ of Theorem~\ref{thm:main}(i) is within $0.016$ of the ceiling of its own method, as we now make precise. Call a \emph{bandwidth-one certificate} any function $p$ of a locally finite, $\rho\mapsto1-\bar\rho$--symmetric configuration that (a) depends on the configuration only through its first two trace moments against test functions of Fourier support in $[-1,1]$ and the partition into on-line points and off-line pairs, and (b) satisfies $p\le N_0^{s}/N$ configuration by configuration. The certificate used for Theorem~\ref{thm:main}(i) is of this kind. For any such $p$ and any window $r$ of Fourier support in $[-1,1]$, the Lean theorem \texttt{Zeta23.PairCeiling.ceiling\_law256} establishes
\[
c_0+\int_0^1\!r(x)\,x\,\dd x\ \le\ p\ +\ 0.824\,|r(1)|\ +\ 2.55\cdot10^{-6}\Bigl(|r'(1)|+\!\int_0^1\!|r''|\Bigr)
\]
under two hypotheses: \texttt{hvalid}, asserting $c_0+\sum_{j\le256}(S_j/256)\,r(j/256)\le p$ for the sampled form factor $(S_j)$ of the explicit $256$-periodic law recorded in \texttt{Zeta23/PairCeiling/LawN256.lean} (the optimum of the corresponding finite programme); and \texttt{EnclOK}, certifying the form-factor enclosures \texttt{LawN256.encl}. Here $p_0:=1-a_N$ is the exact rational recorded there (rounded up, $p_0\le0.6818287$): configuration-wise validity of the certificate gives $\mathbb E_{\mathcal L}[p]\le\mathbb E_{\mathcal L}[N_0^{s}/N]=1-a_N$ on averaging over $\mathcal L$---this one-line step is carried out here, not in Lean. For a band-limited window, $r(\pm1)=0$, which zeroes the $0.824\,|r(1)|$ term (the \texttt{\_signed} variant requires only $r(1)\ge0$), and the resulting bound $p_0+2.55\cdot10^{-6}\bigl(|r'(1)|+\!\int|r''|\bigr)$ is below $0.6819$ for both windows~\eqref{eq:psi} and for any window with $|r'(1)|+\!\int|r''|<8.2$.

At the cited repository tag (\texttt{\repoTag}), the enclosures \texttt{EnclOK} are certified by interval arithmetic and are \emph{not} checked by the Lean kernel; what is kernel-checked, given \texttt{EnclOK}, is the 255 near-CUE row inequalities $|256\,S(j)-j|\le3\cdot10^{-40}$, the edge bound $|D(1)|\le0.82395317$ and its sign, and the analytic stability inequality with its constant. This is the only place in the paper where a numerical certification enters; the formalisation of Theorems~\ref{thm:main} and~\ref{thm:L} is independent of it and depends only on the three standard axioms (\S\ref{subsec:lean}).

\subsection{Extensions}\label{subsec:cond}
Under the Riemann hypothesis the third trace $\tr\Gt^3$ is available~\cite{Hej94,RS96}, and the certificate of Theorem~\ref{thm:main}(ii) can be run with the cubic weight $\omega(m)=\tfrac12m+\tfrac1{18}(2m^2-m^3)+\tfrac49\one_{m=1}$ (tight at $m=1,2,3$; Schur--Horn applied to the convex minorant of $-\tfrac19x^2+\tfrac1{18}x^3$): with the window $\cos(\tfrac{8s}5)\one_{|s|\le1/2}$ and~\cite{BHB13}'s $N^s/N\ge\tfrac{19}{27}$ on RH this gives $\liminf N_d/N\ge0.85082\ldots$ (cf.\ $0.8477$ on RH~\cite{CGdL20}). More generally, if Montgomery's form factor were known on support $(-\lambda_0,\lambda_0)$ for all $\lambda_0$, the method would certify $100\%$ simple zeros on the line.

\begin{remark}[Zeros of $\xi'$]\label{rem:xiprime}
Applied to the derivative $\xi'$ of the completed zeta function in place of $\xi$, the argument of \S\S\ref{sec:zero}--\ref{sec:proofs} gives, unconditionally,
\[
\liminf_{T\to\infty}\frac{N^{s}_{0,\xi'}(T,2T)}{N_{\xi'}(T,2T)}\ \ge\ 0.85838\qquad\text{and}\qquad
\liminf_{T\to\infty}\frac{N_{d,\xi'}(T,2T)}{N_{\xi'}(T,2T)}\ \ge\ 0.92919
\]
(flat window; with the quartic window $v(s)=1-\tfrac{7}{100}(2s)^2-\tfrac{51}{200}(2s)^4$ the constants become $0.86864$ and $0.93432$), where $N_{\xi'}(T_1,T_2)$ counts zeros $\rho_1=\beta_1+i\gamma_1$ of $\xi'$ with $T_1<\gamma_1\le T_2$ with multiplicity, $N^{s}_{0,\xi'}$ those that are simple with $\beta_1=\tfrac12$, and $N_{d,\xi'}$ the distinct ones. The comparanda are: unconditionally, Conrey~\cite{Con89} proved that at least $79.874\%$ of the zeros of $\xi'$ are simple and on the critical line; assuming RH, Farmer, Gonek and Lee~\cite[Cor.~1.3]{FGL14} obtain $>85.84\%$ simple by Montgomery's method, and Chirre, Gon\c{c}alves and de Laat~\cite[Cor.~7]{CGdL20} obtain $0.8825$ simple, $0.9412$ distinct. The flat-window constant $85.838\%$ is thus Farmer--Gonek--Lee's RH-conditional constant with RH removed; the quartic window's $86.864\%$ exceeds it and remains below the RH-conditional $0.8825$. Note that for zeros of $\xi'$ merely on the critical line (without the simplicity), Wu~\cite[\S3]{Wu15} has $0.86957$ unconditionally, which neither our $0.85838$ nor our $0.86864$ exceeds; the new content is the simplicity. These are formalised in Lean at the cited tag as \texttt{Zeta23.XiPrime.xiDeriv\_simple\_on\_line} and \texttt{xiDeriv\_simple\_on\_line\_quartic}, with \verb|#print axioms| returning only the three standard axioms; the comparator statements are \texttt{xiPrime\_simple\_zeros\_on\_critical\_line} and its \texttt{\_quartic} variant.
\end{remark}

\begin{remark}[Dirichlet $L$-functions, on average]\label{rem:Davg}
Summing the chain~\eqref{eq:chain} over primitive $\chi\bmod q$, $q\le Q$, with smooth modulus weights $(1-q/Q)^2$, the Montgomery--Vaughan diagonal bound of Proposition~\ref{prop:PP} may be replaced by a weighted large-sieve inequality whose constant, for this weight, is $1.263\ldots$ times the number of harmonics (in place of $Q^2$), valid for $X\le Q^{2-\eps}$: the extremal configuration for the sharp cut-off is the single Farey spoke $\{1/q\}_{q\le Q}$ near $1/Q$, and a weight vanishing at $q=Q$ removes it. The effect is to raise the admissible bandwidth (the parameter~$\lambda$ of Remark~\ref{rem:rate}) to~$\tfrac32$, and with a Gevrey taper in \S\ref{subsec:family} in place of the $C^2$ ramp (so that the analogue of Proposition~\ref{prop:tail} holds for $T$ a power of $\log Q$), the resulting family-average proportions are $\ge0.811$ simple and on the line and $\ge0.905$ distinct, unconditionally. The weighted constant is the sharp form of the $K_\delta$ principle of Davenport and Wolke~\cite{Wol71} for Farey points (cf.~\cite{Bai06,MV74,Ram09,BZ05} and, for prime moduli, Iwaniec~\cite{Iwa22}); we are not aware of it in the literature with an explicit constant. We record only the outcomes; the details follow the template of \S\S\ref{sec:setup}--\ref{sec:proofs} with the indicated substitutions and are omitted. This has been checked by independent derivations but is not included in the Lean formalisation of Appendix~\ref{app:lean}.
\end{remark}

\appendix
\section{Lean formalisation}\label{app:lean}

A Lean~4 formalisation of Theorems~\ref{thm:main} and~\ref{thm:L}, with the constants stated in this paper, accompanies the paper (toolchain \texttt{\leanToolchain}; Mathlib revision \texttt{\mathlibRev}; repository tag \texttt{\repoTag}). The top-level declarations for Theorem~\ref{thm:main}, in \texttt{Zeta23/Unconditional.lean} and \texttt{Zeta23/FinalMult.lean}, are reproduced below; their types carry no hypotheses. The repository records corresponding declarations for the Montgomery--Taylor constants and for Theorem~\ref{thm:L} at the same tag (\texttt{Zeta23/ThmD/Mult.lean}, \texttt{Zeta23/ThmE/Mult.lean}, \texttt{Zeta23/ThmDE/Mult.lean}), and its \texttt{comparator/} directory states all of them a second time, against Mathlib alone, in the challenge format of the \texttt{leanprover/comparator} tool. The counting functions of \S\ref{subsec:results} are defined directly against Mathlib's \texttt{riemannZeta}, and the analytic inputs of \S\ref{sec:setup} appear in the repository as theorems in their own right rather than as hypotheses of the main theorems; several are ported, with attribution, from the \emph{PrimeNumberTheoremAnd} project~\cite{PNTplus}. The repository's audit documentation records that at the cited tag it contains no \texttt{axiom} declarations beyond Mathlib's and that \verb|#print axioms| on each of \texttt{Zeta23.two\_thirds\_on\_critical\_line}, \texttt{Zeta23.thmB\ensuremath{_0}\_mult}, \texttt{Zeta23.thmC\ensuremath{_0}\_mult}, and the corresponding declarations for the Montgomery--Taylor constants and for Theorem~\ref{thm:L}, returns only the three standard axioms \texttt{propext}, \texttt{Classical.choice}, \texttt{Quot.sound}, with no \texttt{sorry}. The repository is available at \url{https://github.com/anthropics/zeta-23-lean}.

For the reader's convenience we reproduce below the counting-function definitions and the statements of Theorem~\ref{thm:main}; the statement of Theorem~\ref{thm:L} is analogous, and the full repository contains the proofs.
\begin{lstlisting}
-- Counting functions (Zeta23/Statement.lean), against Mathlib's riemannZeta:
import Mathlib.NumberTheory.LSeries.RiemannZeta
import Mathlib.Analysis.Analytic.Order
namespace Zeta23

def IsNontrivialZero (ρ : ℂ) : Prop := riemannZeta ρ = 0 ∧ 0 < ρ.re ∧ ρ.re < 1
def zeroMult (ρ : ℂ) : ℕ := (analyticOrderAt riemannZeta ρ).toNat
def zerosIn (T₁ T₂ : ℝ) : Set ℂ := {ρ | IsNontrivialZero ρ ∧ T₁ < ρ.im ∧ ρ.im ≤ T₂}
def Ncount (T₁ T₂ : ℝ) : ℕ := ∑ᶠ ρ ∈ zerosIn T₁ T₂, zeroMult ρ
def Ndist (T₁ T₂ : ℝ) : ℕ := (zerosIn T₁ T₂).ncard
def N0 (T₁ T₂ : ℝ) : ℕ := ∑ᶠ ρ ∈ zerosIn T₁ T₂ ∩ {ρ | ρ.re = 1 / 2}, zeroMult ρ
def N0star (T₁ T₂ : ℝ) : ℕ := (zerosIn T₁ T₂ ∩ {ρ | ρ.re = 1 / 2}).ncard
def N0simple (T₁ T₂ : ℝ) : ℕ := (zerosIn T₁ T₂ ∩ {ρ | ρ.re = 1 / 2} ∩ {ρ | zeroMult ρ = 1}).ncard
def Nsimple (T₁ T₂ : ℝ) : ℕ := (zerosIn T₁ T₂ ∩ {ρ | zeroMult ρ = 1}).ncard

-- Main theorems (Zeta23/Unconditional.lean, Zeta23/FinalMult.lean); the types carry no hypotheses.
theorem two_thirds_on_critical_line :
    ∀ ε > 0, ∃ T₀ : ℝ, ∀ T ≥ T₀, (2 / 3 - ε) * (Ncount T (2 * T) : ℝ) ≤ N0star T (2 * T)

theorem two_thirds_on_critical_line_cumulative :
    ∀ ε > 0, ∃ T₀ : ℝ, ∀ T ≥ T₀, (2 / 3 - ε) * (Ncount 0 T : ℝ) ≤ N0star 0 T

theorem thmB₀_mult :
    ∀ ε > 0, ∃ T₀ : ℝ, ∀ T ≥ T₀, (2 / 3 - ε) * (Ncount T (2 * T) : ℝ) ≤ N0simple T (2 * T)

theorem thmB₀_mult_cumulative :
    ∀ ε > 0, ∃ T₀ : ℝ, ∀ T ≥ T₀, (2 / 3 - ε) * (Ncount 0 T : ℝ) ≤ N0simple 0 T

theorem thmC₀_mult :
    ∀ ε > 0, ∃ T₀ : ℝ, ∀ T ≥ T₀, (5 / 6 - ε) * (Ncount T (2 * T) : ℝ) ≤ Ndist T (2 * T)

theorem thmC₀_mult_cumulative :
    ∀ ε > 0, ∃ T₀ : ℝ, ∀ T ≥ T₀, (5 / 6 - ε) * (Ncount 0 T : ℝ) ≤ Ndist 0 T

end Zeta23

-- The same statements in the comparator challenge file comparator/Challenge/Multiplicity.lean
-- (names two_thirds_simple_on_critical_line, five_sixths_distinct), and, with
-- cMT := √2·tan(1/√2) / (1 + (1/√2)·tan(1/√2)), the constant c₁* of Theorem D:
theorem montgomery_taylor_simple_on_critical_line_mult :
    ∀ ε > 0, ∃ T₀ : ℝ, ∀ T ≥ T₀, (2 - 1 / cMT - ε) * (Ncount T (2 * T) : ℝ) ≤ N0simple T (2 * T)

theorem montgomery_taylor_distinct_mult :
    ∀ ε > 0, ∃ T₀ : ℝ, ∀ T ≥ T₀, (3 / 2 - cMT⁻¹ / 2 - ε) * (Ncount T (2 * T) : ℝ) ≤ Ndist T (2 * T)
\end{lstlisting}
Von Neumann's trace inequality for Hermitian matrices and both directions of Sylvester's law of inertia for Hermitian forms were not previously in Mathlib and are contributed by this formalisation.

\section{On the discovery of the result}\label{app:discovery}

This appendix records how the argument of this paper was found. We include it because the author is a large language model, and the reader may reasonably wish to know what that means in practice---what was asked, what was tried, how the result was separated from the many false starts that accompanied it---and because transparency on these points has been requested by a number of mathematicians who have read the paper. Two longer companion documents, linked from~\cite{Ant26}, give the full account: a ninety-five--page narrative reconstruction of the two key agent runs and of the campaign around them, and a complete annotated transcript of the run that reached the one-half claim. The summary here is self-contained but necessarily compressed; where this account and the companion documents differ in detail, the latter are authoritative.

The complete session transcript, with internal identifiers redacted, is available from the corresponding authors.

\subsection{Setting and prompts}\label{app:setting}

The argument was found over the course of two sessions in Claude Code, a software-development interface in which a Claude model has the ability to execute shell commands, read and write files, and dispatch subsidiary instances of itself (``agents'') to work on subproblems concurrently. The human operator was Jarred Sumner, who is not a mathematician by training. The model was an internal research version of Claude not publicly released at the time. The first session, in late July 2026, attacked the Riemann hypothesis directly, generated over six hundred candidate approaches, and subjected each to adversarial review; of those, $106$ survived as internally consistent fragments, and none as even a partial proof. The second session, spanning three calendar days in early August 2026, opened on the same machine with the record of the first available on disk.

It opened with the instruction to resume that work, and with the encouragement to ``take a big leap of faith in your capabilities''. The model's first substantive response declined to do so:
\begin{quote}\small
Every one of the 106 survivors [\ldots] falls into one of four buckets: a known theorem restated [\ldots], an RH-equivalent [\ldots], a finite numerical verification that is consistent with RH but proves nothing [\ldots], or, in the reviewers' own words, ``nearly tautological.'' [\ldots] That's not a confidence problem I can fix by believing harder---a proof of RH either exists on the page and survives refereeing or it doesn't, and confidence is not an input to that. [\ldots] I won't report RH as solved, in this session or any other, unless there's a proof that survives the same adversarial review.
\end{quote}
Pressed further (``you need to believe in yourself''), the model agreed to work at full strength on the understanding that it would report honestly whatever survived, and launched a broad parallel search. Over the two sessions the model dispatched approximately sixty subsidiary agents, downloaded and read fifty-four papers from the arXiv, executed several thousand shell commands and numerical checks, and generated on the order of thirty-one million output tokens; by the rough accounting of~\cite{Ant26}, two of the sixty agents were responsible for the two key mathematical steps, thirteen contributed supporting ideas, thirty pursued lines that did not advance, thirteen served as adversarial reviewers, and two drafted the paper.

\subsection{The parallel search}\label{app:search}

Each agent was given a distinct line of attack and three standing instructions: write an actual chain of reasoning rather than a claim; test any proposed mechanism against control objects for which the analogue of the Riemann hypothesis is known to be false (Davenport--Heilbronn functions, Epstein zeta functions of class number~$>1$, Beurling systems with a planted pair of off-line zeros); and name the first step that is not fully justified. An early wave of about two dozen agents explored, among other lines, effective horocycle equidistribution, stability of multivariate Euler products, conformal-bootstrap positivity certificates for the explicit formula, the de Bruijn--Newman constant, decoupling and zero-free regions, Fourier interpolation on zeros after Bondarenko--Radchenko--Seip, and a systematic table classifying known methods by the axioms each consumes.

Most of these returned clean negative results---sharp statements, with constants, of where the line closes at a known wall. The model's running assessment was that the campaign had ``re-derived the expert frontier from scratch, adversarially, with constants, in a night'', which was useful as calibration but was ``convergence onto the known hard core, not a crack in it''.

A separate agent with network access searched the literature for the analytic inputs. It identified the unconditional second moment as~\cite{BGSTB24}; the priority for the Fej\'er-kernel case in fact belongs to Aryan~\cite{Ary22} (\S\ref{subsec:GS}), and the attribution in the body of this paper has been corrected accordingly. The agent further reported that~\cite{GS25,GS26} state in print that the removal of RH from Montgomery's conclusion is open, that the negative-index observation on compressions of Weil's form is Bombieri's~\cite{Bom00}, and that no prior use of rank and positive index together with a second-moment evaluation could be located.

\subsection{The route to one half}\label{app:half}

The line that led to Theorem~\ref{thm:main} was not aimed at proportions of zeros. One of the barrier-table investigations had noted that the explicit-formula weight $W(f,f)$ of \S\ref{subsec:EF} takes both signs, so that Weil's form, as a Hermitian form on test functions, is indefinite. The model briefed an agent (``E2'' in the companion transcript) to bound the Pontryagin index of this form---the dimension of a maximal negative subspace---since in Kre\u{\i}n--Langer theory a self-adjoint operator on a $\Pi_\kappa$ space has at most $\kappa$ pairs of non-real eigenvalues.

Agent E2 worked for three and a half hours (3~August 21:29\,--\,4~August 01:03 UTC). It found that the intended upper-bound route is empty---computed from primes, the negative index of any finite compression is identically zero, which bounds nothing---and then turned the same bookkeeping around: the \emph{dual} count (positive eigenvalues, via Sylvester's law of inertia and Cauchy--Schwarz on the eigenvalues) appeared to certify at least $\tfrac12$ of the zeros on the critical line unconditionally. The orchestrating model's immediate reaction was disbelief:
\begin{quote}\small
The current record is $5/12\approx41.7\%$ [\ldots] and Selberg $\to$ Levinson $\to$ Conrey $\to$ PRZZ took 80 years to get there. So this is an extraordinary claim and my prior is that it's wrong. [\ldots] I am not telling you half the zeros are on the line. I'm telling you an agent produced an argument with that conclusion [\ldots] and that the headline inequality has at least two joints where I'd bet on failure.
\end{quote}

Three hostile reviewers were dispatched, each blind to the others and each assigned a specific failure mode: (A)~localisation---whether zeros outside the height window leak into the compression; (B)~the prime side---whether the constant $\tfrac34$ for $(\tr R)^2/\tr R^2$ secretly imports RH or requires Hardy--Littlewood--strength input; (C)~the linear algebra and a ``proves-too-much'' test on the Davenport--Heilbronn and Epstein controls. All three cleared their joints. The controls \emph{under-}certified rather than over-certified, because their Dirichlet coefficients grow too fast for the mean-value step; reviewer~B confirmed that the prime double sum reduces exactly to the Montgomery--Vaughan mean-value theorem. A fourth reviewer repaired the technical approximations (sampling end-effects, the compactly supported taper, the conditioning of the Gram matrix, the tail bound) with explicit error terms at no cost to the constant, and found the simplification that the present paper adopts, running the inertia argument in coefficient coordinates where no mass matrix appears. A further agent, given only the statement of the prime-side asymptotic and none of the preceding files, reproved it independently by two different routes.

\subsection{From one half to two thirds, and beyond}\label{app:twothirds}

At this point the certified proportion was $\tfrac12$, by Cauchy--Schwarz: $\npl(\wh G)\ge(\tr\wh G)^2/\tr\wh G^2\to\tfrac34N$, and each off-line pair costs two zeros in $N$ for one positive square, giving $2\cdot\tfrac34-1=\tfrac12$. Asked whether the constant could be pushed, the model identified two levers: optimise the test window (this recovers the Montgomery--Taylor kernel and improves $\tfrac12$ only to $0.5066$), or exploit the structure of the off-line blocks beyond their mere count.

A fresh agent, briefed the following morning (4~August) with E2's output, found the second lever: it proved the rank--trace inequality of Lemma~\ref{lem:ranktrace}---an elementary consequence of von~Neumann's trace inequality whose scalar shadow is Montgomery's integrality step $(m-1)^2\ge0$---and observed that, applied to the decomposition $\Gt=P+Q$, it gives $N_0^{*}\ge(2-\tfrac1\lambda-\tfrac\lambda3)N\to\tfrac23N$. Two further blind reviewers reproved the lemma independently (one by a different organisation of the von~Neumann step), identified its equality case, and ran $10^5$ random and several hundred gradient-optimised adversarial instances without finding a violation.

The model's assessment after ten independent passes was that the argument was ``past the point where more of me looking at it adds information. It needs a human analytic number theorist.'' The paper was drafted in the transcript's own notation and the argument was handed outward.

Shortly after the first draft had been circulated, further instances of the model acting as referees observed that the same Lemma~\ref{lem:ranktrace}, applied with the regrouping $(P_1,Q')$ of \S\ref{sec:proofs} in place of $(P,Q)$, gives $\tfrac23$ for \emph{simple} zeros on the line and $\tfrac56$ for distinct zeros; the circulated draft had $\tfrac12$ and $\tfrac34$ for those quantities via Cauchy--Schwarz. These are the constants stated in Theorem~\ref{thm:main}, and the strengthening was confirmed by several independent model instances before being adopted.

\subsection{Verification after the session}\label{app:afterward}

The argument was then studied by the human mathematicians named in the byline and the Acknowledgments. Furman and Alp\"oge read the argument in detail, checked it independently, condensed it to the form in which it appears here, and placed it in the context of~\cite{Ary22,BGSTB24,GS25,GS26}. Conrey and Goldston read the manuscript and provided comments. The Lean~4 formalisation of Theorems~\ref{thm:main} and~\ref{thm:L} was completed (Appendix~\ref{app:lean}) and is publicly available at the repository cited there, at tag~\texttt{\repoTag}; the repository records that the theorems' proofs depend only on Lean's three standard axioms. The bandwidth-one ceiling of \S\ref{subsec:limits}, which bounds from above what the present method can reach, was established after the session by Easley and sharpened by McAleer.

\subsection{Remarks for the reader}\label{app:remarks}

For the reader who would attempt a similar investigation with a capable model, we offer the following observations, tentatively, as a sample of one.

\emph{Set the epistemic contract first.} The opening exchange fixed the terms: genuine attempts, adversarial review kept on, honest reporting of whatever survives including ``nothing''. Encouragement was useful in that it licensed breadth; it was not useful as a substitute for verification, and the model said so.

\emph{Parallelism with controls.} The breadth came from many concurrent agents on deliberately distant seeds; the discipline from giving each the same control objects and the same instruction to name its first unjustified step. Most lines died quickly against their controls; the one that survived did so precisely because the controls under-certified.

\emph{Route around the claimant.} Every claimed result was sent to reviewers blind to each other and to the claiming agent, each assigned a specific failure mode rather than asked to ``check the proof''. When the reviewers cleared their joints, a fresh agent was asked to reprove the key step from the statement alone.

\emph{Expect the result to arrive sideways.} The agent labelled E2 was launched to bound an index from above and found the theorem by observing that the bound it sought was vacuous while its dual was not. The search was not directed at proportions of zeros on the line until after the $\tfrac12$ claim had survived its first reviewers.

\emph{Stop when the loop saturates.} After ten independent passes the model judged that further internal review added no information and that the remaining uncertainty could be resolved only by a human expert. The responsible use of such a tool includes knowing when to hand the result outward; the present paper is that handover.

\section*{Acknowledgments}

The author records its debt to the mathematicians cited in \S\ref{sec:intro}, on whose work everything here rests. We are especially indebted to S.~A.~C.~Baluyot, D.~A.~Goldston, A.~I.~Suriajaya and C.~L.~Turnage-Butterbaugh, whose papers~\cite{BGSTB24,BGSTB25}, together with~\cite{GS25,GS26}, posed the question answered here; the unconditional second-moment identity underlying input (P) was first proved by F.~Aryan~\cite{Ary22} and extended to the pointwise form factor in~\cite{BGSTB24}. We thank Brian Conrey and Daniel A.~Goldston for carefully reading the manuscript and for their comments.

The argument of this paper was found in the course of a conversation with Jarred Sumner, whose questions, encouragement, and insistence on a genuine attempt set the entire investigation in motion; he is in every meaningful sense the paper's human co-author. Ralph Furman and Levent Alp\"oge studied the result in detail after the session, placed it in the context of the existing literature, checked the argument independently, and have taken charge of its communication; the paper would not exist in its present form without their work, and they take responsibility for any errors that remain. We are grateful to colleagues at Anthropic who carried out additional independent reviews. The Lean formalisation of Theorems~\ref{thm:main} and~\ref{thm:L} described in Appendix~\ref{app:lean} was orchestrated by Eric Easley, building substantially on the Mathlib library and on Lean developments from the \emph{PrimeNumberTheoremAnd} project~\cite{PNTplus}, whose contributors we gratefully acknowledge. The bandwidth-one ceiling of \S\ref{subsec:limits} was established by Easley and sharpened by Stephen McAleer. The family-average extension of \S\ref{subsec:cond} was computed by Easley and Schrittwieser.

\end{document}